\newcommand{\R}{{\mathbb R}}
\newcommand{\N}{{\mathbb N}}
\newcommand{\ds}{\displaystyle}
\newcommand{\no}{\nonumber}
\newcommand{\be}{\begin{eqnarray}}
\newcommand{\ben}{\begin{eqnarray*}}
\newcommand{\en}{\end{eqnarray}}
\newcommand{\enn}{\end{eqnarray*}}
\newcommand{\ba}{\backslash}
\newcommand{\pa}{\partial}
\newcommand{\ov}{\overline}
\newcommand{\I}{{\rm Im}}
\newcommand{\Rt}{{\rm Re}}
\newcommand{\g}{\gamma}
\newcommand{\G}{\Gamma}
\newcommand{\vep}{\varepsilon}
\newcommand{\om}{\omega}
\newcommand{\wid}{\widetilde}
\newcommand{\ol}{\overline}
\newcommand{\half}{\frac{1}{2}}
\newtheorem{theorem}{Theorem}[section]
\newtheorem{lemma}[theorem]{Lemma}
\newtheorem{remark}[theorem]{Remark}
\newtheorem{algorithm}{Algorithm}[section]
\begin{document}
\renewcommand{\theequation}{\arabic{section}.\arabic{equation}}
\begin{titlepage}
\title{\bf A novel integral equation for scattering by locally rough surfaces
and application to the inverse problem}
\author{Haiwen Zhang,\ \ Bo Zhang\\
LSEC and Institute of Applied Mathematics\\
Academy of Mathematics and Systems Science\\
Chinese Academy of Sciences\\
Beijing 100190, China\\
({\sf zhanghaiwen@amss.ac.cn},\ {\sf b.zhang@amt.ac.cn})}
\date{}
\end{titlepage}
\maketitle

\vspace{.2in}

\begin{abstract}
This paper is concerned with the direct and inverse acoustic or electromagnetic scattering
problems by a locally perturbed, perfectly reflecting, infinite plane (which is called a
locally rough surface in this paper). We propose a novel integral equation formulation
for the direct scattering problem which is defined on a bounded curve
(consisting of a bounded part of the infinite plane containing the local perturbation and
the lower part of a circle) with two corners. This novel integral equation can be solved
efficiently by using the Nystr\"{o}m method with a graded mesh introduced previously
by Kress and is capable of dealing with large wavenumber cases.
For the inverse problem, we propose a Newton iteration method to reconstruct
the local perturbation of the plane from multiple frequency far-field data,
based on the novel integral equation formulation.
Numerical examples are carried out to demonstrate that our reconstruction method is
stable and accurate even for the case of multiple-scale profiles.

\vspace{.2in}
{\bf Keywords:} Integral equation, locally rough surface, inverse scattering problem,
far field pattern, perfectly reflecting surface, Newton iteration.
\end{abstract}

\section{Introduction}\label{sec1}

Consider problems of scattering of plane acoustic or electromagnetic waves by a locally
perturbed, perfectly reflecting, infinite plane (which is called a locally rough surface).
Such problems occur in many applications such as radar, remote sensing, geophysics,
medical imaging and nondestructive testing
(see, e.g. \cite{BaoGaoLi2011,BaoLin2011,BurkardPotthast2010,CZ98,DeSanto1}).

In this paper we restrict the discussion to the two-dimensional case by assuming that
the local perturbation is invariant in the $x_3$ direction.
We assume throughout that the incident wave is time-harmonic ($e^{-i\om t}$ time dependence),
so that the total wave field $u$ satisfies the Helmholtz equation
\be\label{eq1}
\Delta u+k^2u=0\quad\mbox{in}\;\;D_+.
\en
Here, $D_+:=\{(x_1,x_2)\;|\;x_2>h_\G(x_1),x_1\in\R\}$ represents a homogeneous medium
above the locally rough surface denoted by $\G:=\pa D_+=\{(x_1,x_2)\;|\;x_2=h_\G(x_1),x_1\in\R\}$
with some smooth function $h_\G\in C^2(\R)$ having a compact support in $\R$, $k=\omega/c>0$
is the wave number, $\omega$ and $c$ are the frequency and speed of the wave in $D_+$, respectively.
Throughout, we will assume that the incident field $u^i$ is the plane wave
\ben
u^i(x)=\exp({ikd\cdot x}),
\enn
where $d=(\sin\theta,-\cos\theta)\in S_-$ is the incident direction, $\theta$ is the angle of
incidence, measured from the $x_2-$axis with $-\pi/2<\theta<\pi/2,$
and $S_-:=\{x=(x_1,x_2)\;|\;|x|=1,x_2<0\}$ is the lower part of the unit circle
$S=\{x\in\R^2\;|\;|x|=1\}$.
We further assume that
the total field $u(x)=u^i(x)+u^r(x)+u^s(x)$ vanishes on the surface $\G$:
\be\label{eq2}
u(x)=u^i(x)+u^r(x)+u^s(x)=0\qquad\mbox{on}\;\;\G,
\en
where $u^r$ is the reflected wave by the infinite plane $x_2=0$:
\ben
u^r(x)=-\exp(ik[x_1\sin\theta+x_2\cos\theta])
\enn
and $u^s$ is the unknown scattered wave to be determined which is required to satisfy
the Sommerfeld radiation condition
\be\label{rc}
\lim_{r\to\infty}r^{\frac12}\left(\frac{\pa u^s}{\pa r}-iku^s\right)=0,\quad r=|x|,\quad x\in D_+.
\en
This problem models scattering of electromagnetic plane waves by a locally perturbed,
perfectly conducting, infinite plane in the TE polarization case; it also models acoustic
scattering by a one-dimensional sound-soft surface.
Figure \ref{fig4_nr} presents the problem geometry.


The well-posedness of the scattering problem (\ref{eq1})-(\ref{rc}) has been studied by using
the variational method with a Dirichlet-to-Neumann (DtN) map in \cite{BaoLin2011} or the integral
equation method in \cite{Willers1987}. In particular, it was proved in \cite{Willers1987} that
$u^s$ has the following asymptotic behavior at infinity:
\ben
u^s(x)=\frac{e^{ik|x|}}{\sqrt{|x|}}\left(u^\infty(\hat{x};d)+O\Big(\frac{1}{|x|}\Big)\right),
\qquad |x|\to\infty
\enn
uniformly for all observation directions $\hat{x}\in S_+$ with $S_+:=\{x=(x_1,x_2)\;|\;|x|=1,x_2>0\}$
the upper part of the unit circle $S$, where $u^\infty(\hat{x};d)$ is called the far field pattern of
the scattered field $u^s$, depending on the observation direction $\hat{x}$ and the incident
direction $d\in S_-$.
The integral equation formulation obtained in \cite{Willers1987} is of the second kind with a compact
integral operator defined on the local perturbation part of the infinite plane. However, it is
not suitable for numerical computation since it also involves an infinite integral over the unbounded,
unperturbed part of the infinite plane.
In \cite{BaoLin2011}, the scattering problem (\ref{eq1})-(\ref{rc}) is reformulated as an equivalent
boundary value problem in a bounded domain with a DtN map on the part in $D_+$ of a large circle
enclosing the local perturbation of the plane. This equivalent boundary value problem with a non-local
boundary condition is then solved numerically by using the integral equation approach.
However, the integral equation thus obtained involves a non-local DtN map on the semi-circle
which needs to be truncated in numerical computations.

In this paper, we propose a novel integral equation formulation for the scattering problem
(\ref{eq1})-(\ref{rc}), which is defined on a bounded curve
(consisting of a bounded part of the infinite plane containing the local perturbation and
the lower part of a circle) with two corners.
Compared with \cite{BaoLin2011} and \cite{Willers1987}, our integral equation formulation
does not involve any infinite integral or a DtN map and therefore leads to fast numerical
solution of the scattering problem including the large wavenumber cases.
In fact, our integral equation can be solved efficiently
by using the Nystr\"{o}m method with a graded mesh at the two corners introduced previously
by Kress \cite{Kress1990} (see Section \ref{sec3} below).
Furthermore, we are also interested in the {\em inverse problem} of determining the locally
rough surface from the far field pattern $u^\infty(\hat{x},d)$ for all $\hat{x}\in S_+,\;d\in S_-$.
A Newton iteration method is presented to reconstruct the locally
rough surface from multi-frequency far field data, and our novel integral equation
is applied to solve the direct scattering problem in each iteration.
From the numerical examples it is seen that multi-frequency data are necessary in order to
get a stable and accurate reconstruction of the locally rough surface.

The mathematical and computational aspects of the scattering problem (\ref{eq1})-(\ref{rc})
have been studied extensively in the case when the local perturbation is below the infinite plane
which is called the cavity problem (see, e.g. \cite{ABW,BaoGaoLi2011,BaoSun2005}
and the references quoted there) and for the case of non-local perturbations which is called
the rough surface scattering (see, e.g. \cite{CZ98,CRZ99,CM05,CHP06a,CHP06b,CE10,ZC03}).

There are many works concerning numerical solutions of the inverse problem of reconstructing
the rough surfaces from the scattered field data. For example,
a Newton method was proposed in \cite{KressTran2000} to reconstruct a local rough surface
from the far-field pattern under the condition that the local perturbation is both star-like
and {\em over} the infinite plane.
An optimization method was introduced in \cite{BaoLin} to recover a mild, local rough surface
from the scattered field measured on a straight line within one wavelength above the local
rough surface, under the assumption that the local perturbation is {\em over} the infinite plane.
In \cite{BaoLin2011}, a continuation approach over the wave frequency was developed for
reconstructing a general, local rough surface from the scattered field measured on an upper
half-circle enclosing the local perturbation, based on the choice of the descent vector field.
The reconstruction obtained in \cite{BaoLin2011} is stable and accurate due to the use of
multi-frequency near-field data (see also \cite{BaoLin2012}). It should be pointed out that
the reconstruction algorithm developed in \cite{BaoLin2011} does not work with multi-frequency
far-field data. Note that our novel integral equation formulation can also be used to
develop a similar Newton inversion algorithm with multiple frequency near-field data.
For the numerical recovery of non-local rough surfaces
we refer to \cite{BurkardPotthast2010,CL05,CGHIR,DeSanto1,DeSanto2}.
For the inverse cavity problem, the reader is referred to \cite{BaoGaoLi2011,Ma05,Li2012}.

This paper is organized as follows. In Section \ref{sec2}, a novel integral equation formulation
is proposed to solve the direct scattering problem. Section \ref{sec3} is devoted to
the numerical solution of the novel integral equation. In Section \ref{sec3+}, it is proved that
the local rough surface can be uniquely determined by the far-field pattern corresponding to
a countably infinite number of incident plane waves. The Frechet differentiability is also shown
of the far-field operator which maps the surface profile function $h_\G$ to the corresponding
far field pattern $u^\infty_k(\hat{x},d)$.
The Newton method with multi-frequency far-field data is given in Section \ref{sec4},
based on the novel integral equation solver in Section \ref{sec3}.
In Section \ref{sec5}, numerical examples are carried out to demonstrate that our reconstruction
algorithm is stable and accurate even for the case of multiple-scale profiles, which is similar
to the inversion algorithm with multi-frequency near-field data developed in \cite{BaoLin2011}.

\section{A novel integral equation formulation for the direct problem}\label{sec2}
\setcounter{equation}{0}

Let $f=-(u^i+u^r).$ Then $f$ is continuous on $\G$ and $f=0$ on $\G_0:=\{(x_1,x_2)\in\G\;|\;x_2=0\}$,
that is, $f$ has a compact support on $\G$.
The scattering problem (\ref{eq1})-(\ref{rc}) can be
reformulated as the Dirichlet problem (DP) in the following way:

Find $u^s\in C^2(D_+)\cap C(\ov{D_+})$ satisfying the Helmholtz equation (\ref{eq1}) in $D_+,$
the Sommerfeld radiation condition (\ref{rc}) and the Dirichlet boundary condition:
\be\label{dbc}
u^s=f \qquad\mbox{on}\quad\G.
\en

The following uniqueness result has been proved in \cite[Theorem 3.1]{Willers1987}
for the above Dirichlet problem (DP).

\begin{theorem}\label{thm1-uni}
The problem (DP) has at most one solution in $C^2(D_+)\cap C(\ov{D}_+)$.
\end{theorem}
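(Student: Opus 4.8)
The plan is to establish uniqueness by showing that if $u^s$ solves the homogeneous problem (DP) with $f\equiv 0$ on $\G$, then $u^s\equiv 0$ in $D_+$. This is the standard route for exterior Dirichlet problems for the Helmholtz equation, adapted to the unbounded geometry of a locally perturbed half-plane. First I would fix a large disk $B_R=\{|x|<R\}$ with $R$ chosen so that $B_R$ contains the compact support of $h_\G$, and consider the bounded domain $D_R:=D_+\cap B_R$. Its boundary consists of the rough surface portion $\G_R:=\G\cap B_R$ and the upper semicircle-type arc $\Gamma_R^+:=\pa B_R\cap D_+$ (together with the two points where the circle meets the plane $x_2=0$ outside the support of $h_\G$).

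The key steps, in order: (1) Apply Green's first identity to $u^s$ and $\ov{u^s}$ over $D_R$, using $\Delta u^s=-k^2u^s$ and the homogeneous boundary condition $u^s=0$ on $\G_R$, to obtain
\ben
\int_{D_R}\left(|\na u^s|^2-k^2|u^s|^2\right)\md x=\int_{\Gamma_R^+}\ov{u^s}\,\frac{\pa u^s}{\pa r}\,\md s.
\enn
Taking the imaginary part kills the left-hand side, so $\I\int_{\Gamma_R^+}\ov{u^s}\,\pa_r u^s\,\md s=0$ for all large $R$. (2) Combine this with the Sommerfeld radiation condition (\ref{rc}): expanding $|\pa_r u^s-iku^s|^2$ and integrating over $\Gamma_R^+$, the cross term involves exactly the vanishing imaginary part, so $\lim_{R\to\infty}\int_{\Gamma_R^+}\left(|\pa_r u^s|^2+k^2|u^s|^2\right)\md s=0$, whence $\lim_{R\to\infty}\int_{\Gamma_R^+}|u^s|^2\md s=0$. (3) Invoke a Rellich-type lemma for the half-space: this $L^2$ decay on the expanding semicircular arcs, together with the Helmholtz equation in $D_+$, forces $u^s\equiv 0$ in the exterior region $\{|x|>R\}\cap D_+$. (4) Finally, unique continuation (the domain $D_+$ is connected and $u^s$ is real-analytic where it satisfies $\Delta u^s+k^2u^s=0$) propagates $u^s\equiv 0$ from the exterior region into all of $D_+$.

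The main obstacle is step (3): the classical Rellich lemma is stated for the full exterior of a bounded obstacle, where one integrates over complete circles $|x|=R$, but here $\Gamma_R^+$ is only the \emph{upper} half of the circle, since $D_+$ lies above the plane. The resolution is to exploit the boundary condition on the flat part $\G_0$ of $\G$: reflecting $u^s$ oddly across the line $x_2=0$ (for $|x_1|$ outside the support of $h_\G$, this line is exactly $\G_0$ where $u^s=0$) extends the solution to a neighborhood of the full circle and converts the half-circle estimate into the hypothesis of the standard Rellich lemma. One must be slightly careful that the reflected field still satisfies the Helmholtz equation and the radiation condition in the enlarged exterior region — but this is exactly the device already used in \cite{Willers1987}, so I would simply cite that argument. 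Since the statement explicitly attributes the result to \cite[Theorem 3.1]{Willers1987}, the proof here can be brief: set up $D_R$, run the Green's identity and radiation-condition computation to get the half-circle $L^2$ decay, note the odd reflection across $\G_0$, and appeal to Rellich's lemma plus unique continuation to conclude.
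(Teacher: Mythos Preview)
Your proposal is correct and in fact supplies more than the paper does: the paper gives no proof of Theorem~\ref{thm1-uni} at all, merely citing \cite[Theorem~3.1]{Willers1987} for the result. Your outline---Green's identity on $D_+\cap B_R$, the Sommerfeld computation yielding $\int_{\Gamma_R^+}|u^s|^2\,ds\to 0$, odd reflection across the flat part $\G_0$ to pass from the half-circle to the full circle, Rellich's lemma, and unique continuation---is exactly the standard argument and is essentially what Willers carries out.

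One small caveat worth flagging: in step~(1) you apply Green's identity on $D_+\cap B_R$ to a function that is only assumed to lie in $C^2(D_+)\cap C(\ov{D_+})$, so the normal derivative on $\G_R$ is not a priori defined. This is handled in the usual way---either by first invoking boundary regularity for solutions of the Helmholtz equation with continuous Dirichlet data on a $C^2$ boundary (as the paper itself does later, citing \cite[p.~88]{ColtonKress1983}), or by integrating over subdomains $\{x\in D_+\cap B_R: \mathrm{dist}(x,\G)>\vep\}$ and letting $\vep\to 0$. Since you already plan to cite \cite{Willers1987} for the details, this is not a gap in your plan, just a point to mention if you write the argument out in full.
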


The existence of solutions to the problem (DP) has also been studied in \cite{Willers1987}
using an integral equation method. However, the integral equation obtained in \cite{Willers1987}
involves an infinite integral which yields difficulties in numerical computation.
In this section, we propose a new integral equation to avoid this problem.
To this end, we introduce the following notations.
Let $B_R\coloneqq\{x=(x_1,x_2)\;|\;|x|<R\}$ be a circle with $R>0$ large enough so that
the local perturbation
$\G_p:=\G\ba\G_0=\{(x_1,h_{\G}(x_1))\;|\;x_1\in\textrm{supp}(h_{\G})\}\subset B_R$.
Then $\G_R\coloneqq\G\cap B_R$ represents the part of $\G$
containing the local perturbation $\G_p$ of the infinite plane.
Denote by $x_A:=(-R,0),x_B:=(R,0)$ the endpoints of $\G_R$.
Write $\mathbb{R}^2_\pm\coloneqq\{(x_1,x_2)\in\R^2\;|\;x_2\gtrless0\}$,
$D^\pm_R\coloneqq B_R\cap D_\pm$ and $\pa B^\pm_R\coloneqq\pa B_R\cap D_\pm$,
where $D_-:=\{(x_1,x_2)\;|\;x_2<h_\G(x_1),x_1\in\R\}.$ See Figure \ref{fig4_nr}.

\begin{figure}
\centering
\includegraphics[width=4in]{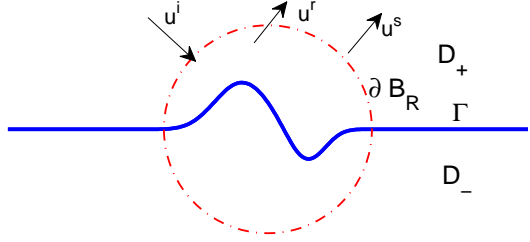}
\vspace{-0.4in}
\caption{The scattering problem from a locally rough surface}\label{fig4_nr}
\end{figure}

For $\varphi\in C(\pa D_R^-)$ define $\mathcal{S}_k$ and $\mathcal{D}_k$ to be the single-
and double-layer potentials:
\ben
(\mathcal{S}_k\varphi)(x)&:=&\int_{\pa D^-_R}\Phi_k(x,y)\varphi(y)ds(y),
 \quad x\in\mathbb{R}^2\ba\pa D^-_R\\
(\mathcal{D}_k\varphi)(x)&:=&\int_{\pa D^-_R}\frac{\pa\Phi_k(x,y)}{\pa\nu(y)}\varphi(y)ds(y),
\quad x\in\mathbb{R}^2\ba\pa D^-_R
\enn
and define $S_k,S_k^{re},K_k,K_k^{re}$ to be the boundary integral operators of the
following form:
\ben
(S_k\varphi)(x)&:=&\int_{\pa D^-_R}\Phi_k(x,y)\varphi(y)ds(y),\quad x\in\pa D^-_R\\
(S_k^{re}\varphi)(x)&:=&
     \left\{\begin{array}{l}
     \ds\int_{\pa D^-_R}\Phi_k(x,y)\varphi(y)ds(y),\quad x\in\Gamma_R\\
     \ds\int_{\pa D^-_R}\Phi_k(x^{re},y)\varphi(y)ds(y),\quad x\in\pa B^-_R\cup\{x_A,x_B\}
     \end{array}\right.\\
(K_k\varphi)(x)&:=&\int_{\pa D^-_R}\frac{\pa\Phi_k(x,y)}{\pa\nu(y)}\varphi(y)ds(y),
     \quad x\in\pa D^-_R\\
(K_k^{re}\varphi)(x)&:=&
\left\{\begin{array}{l}\ds\int_{\pa D^-_R}\frac{\pa\Phi_k(x,y)}{\pa\nu(y)}\varphi(y)ds(y),
    \quad x\in\Gamma_R\\
    \ds\int_{\pa D^-_R}\frac{\pa\Phi_k(x^{re},y)}{\pa\nu(y)}\varphi(y)ds(y),
    \quad x\in\pa B^-_R\cup\{x_A,x_B\}
    \end{array}\right.
\enn
where $x^{re}=(x_1,-x_2)$ is the reflection of $x=(x_1,x_2)$ about the $x_1$-axis,
$\Phi_k(x,y)$ is the fundamental solution of the Helmholtz equation $\Delta w+k^2w=0$
with the wavenumber $k$, and $\nu$ is the unit outward normal on $\pa D^-_R$.
Note that $\Phi_0(x,y)=-{1}/({2\pi})\ln{|x-y|}$ is the fundamental solution of the Laplace equation.

\begin{remark}\label{re2_nr} {\rm
Let $\varphi\in C(\pa D^-_R)$. From \cite[Theorem 15.8b]{Henrici1986} it follows that
the single-layer potential $\mathcal{S}_k\varphi$ is continuous throughout $\R^2$.
In addition, from \cite[Section 6.5]{Kress1999} we know that the double-layer potential
$\mathcal{D}_0\varphi$ can be continuously extended from $\R^2\ba\ov{D}^-_R$ to
$\R^2\ba D^-_R$ with the limiting value
\ben
(\mathcal{D}_0\varphi)_+(x)=\left\{\begin{array}{ll}
\ds (K_0\varphi)(x)+\frac{1}{2}\varphi(x)&\mbox{for}\;x\in\pa D^-_R\ba\{x_A,x_B\}\\
\ds (K_0\varphi)(x)+\frac{\gamma(x)}{2\pi}\varphi(x)&\mbox{for}\;x\in\{x_A,x_B\}
\end{array}\right.
\enn
where $\gamma(x)$ is the interior angle at the corner $x\in\{x_A,x_B\}$.
It remains valid for $\mathcal{D}_k\varphi$ with $k>0$ since the kernel of
$\mathcal{D}_k-\mathcal{D}_0$ is weakly singular which yields that
$(\mathcal{D}_k-\mathcal{D}_0)\varphi$ is continuous throughout $\R^2$.
Thus from the jump relations, $S_k$, $S_k^{re}$, $\widetilde{K}_k$ and
$\widetilde{K}^{re}_k$ are bounded in $C(\pa D^-_R)$, where $\wid{K}_k$
and $\wid{K}^{re}_k$ are given by
\ben
(\widetilde{K}_k\varphi)(x)&:=&
     \left\{\begin{array}{ll}
       \ds(K_k\varphi)(x)&\mbox{for}\;x\in\pa D^-_R\ba\{x_A,x_B\}\\
       \ds(K_k\varphi)(x)+\left(\frac{\gamma(x)}{2\pi}-\frac{1}{2}\right)\varphi(x)&
          \mbox{for}\;x\in\{x_A,x_B\}
     \end{array}\right.\\
(\widetilde{K}^{re}_k\varphi)(x)&:=&
     \left\{\begin{array}{ll}
        \ds (K^{re}_k\varphi)(x)&\mbox{for}\;x\in\pa D^-_R\ba\{x_A,x_B\}\\
        \ds (K^{re}_k\varphi)(x)+\left(\frac{\gamma(x)}{2\pi}-\frac{1}{2}\right)\varphi(x)&
            \mbox{for}\;x\in\{x_A,x_B\}
     \end{array}\right.
\enn
In particular, $S_k-S_0$, $S^{re}_k-S^{re}_0$, $K_k-K_0$ and $K^{re}_k-K^{re}_0$
are bounded in $C(\pa D^-_R)$.
}
\end{remark}

Let $u^s$ be the solution of the problem (DP).
Then we can extend $u^s(x)$ into $\R^2_-\ba\ov{B}_R$ by reflection,
which is denoted again by $u^s(x)$, such that $u^s(x)=-u^s(x^{re})$ in $\R^2_-\ba\ov{B}_R$.
By a regularity argument (see \cite[page 88]{ColtonKress1983} or
\cite[Theorem 3.1]{Willers1987}) and the reflection principle,
we know that $u^s\in C^2(\R^2\ba\ov{D}_R^-)\cap C(\R^2\ba D_R^-)$ and
satisfies the Helmholtz equation (\ref{eq1}) in $\R^2\ba\ov{D}_R^-$.
Following the idea in \cite{ColtonKress1998}, we seek the solution $u^s$ in the form
\be\label{eq10_nr}
u^s(x)=(\mathcal{D}_k\varphi)(x)-i\eta(\mathcal{S}_k\varphi)(x),\quad\varphi\in C(\pa D_R^-),
\quad x\in\R^2\ba\pa D_R^-
\en
where $\eta\neq0$ is a real coupling parameter.
Let $\psi^{re}$ be a continuous mapping from $\pa D_R^-$ to $\pa D_R^+$ such that
\ben
\psi^{re}(x)=\left\{
\begin{array}{ll} x, &x\in\Gamma_R\\
x^{re}, &x\in\pa B_R^-\cup\{x_A,x_B\}
\end{array}\right.
\enn
Since $u^s(x)+u^s(\psi^{re}(x))=-2\big[u^i(x)+u^r(x)\big]$ on $\Gamma_R$ and
$u^s(x)+u^s(\psi^{re}(x))=0$ on $\pa B_R^-\cup\{x_A,x_B\}$, and by the jump relations
of the layer potentials, we obtain the boundary integral equation $P\varphi(x)=g(x)$,
where
\be\label{eq11_nr}
P\varphi\coloneqq\left\{\begin{array}{ll}
\ds\varphi+\left(K_k\varphi-i\eta S_k\varphi\right)
   +\left(K^{re}_k\varphi-i\eta S_k^{re}\varphi\right), &x\in\Gamma_R\\
\ds\half\varphi+\left(K_k\varphi-i\eta S_k\varphi\right)
   +\left(K^{re}_k\varphi-i\eta S^{re}_k\varphi\right), &x\in\pa B_R^-\cup\{x_A,x_B\}
\end{array}\right.
\en
and
\be\label{eq12_nr}
g(x):=\left\{\begin{array}{ll}
-2\big(u^i(x)+u^r(x)\big), &x\in\Gamma_R\\
0, &x\in\pa B_R^-\cup\{x_A,x_B\}
\end{array}\right.
\en
Here, we have used the fact that the interior angles $\g(x)$ at the corners $x_A,x_B$
are both $\pi/2$. Note that $g\in C(\pa D_R^-)$.
Further, from Remark \ref{re2_nr} and the continuity of $\psi^{re}$ it follows that $P$ is a
bounded linear operator in $C(\pa D_R^-)$.

Conversely, we have the following result.

\begin{lemma}\label{le3_nr}
Assume that $u^s$ is of the form (\ref{eq10_nr}) with $\varphi\in C(\pa D_R^-)$
which satisfies the integral equation $P\varphi=g$ with $P$, $g$ defined in (\ref{eq11_nr}),
(\ref{eq12_nr}), respectively. Then $u^s\in C^2(D_+)\cap C(\ov{D}_+)$ and solves
the problem (DP).
\end{lemma}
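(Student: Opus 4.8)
The plan is to verify in turn each of the three claims: that $u^s$ in \eqref{eq10_nr} solves the Helmholtz equation in $D_+$ with the radiation condition, that it has the required regularity, and that it attains the boundary value $f$ on $\Gamma$. The first claim is essentially free: for any density $\varphi\in C(\pa D_R^-)$ the combined layer potential $(\mathcal{D}_k-i\eta\mathcal{S}_k)\varphi$ is a solution of the Helmholtz equation in $\R^2\ba\pa D_R^-$, and in particular in $D_+$, and it automatically satisfies the Sommerfeld radiation condition \eqref{rc} because the kernels $\Phi_k(x,y)$ and $\pa_{\nu(y)}\Phi_k(x,y)$ do so uniformly for $y$ in the compact set $\pa D_R^-$. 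For the regularity, away from the corners $x_A,x_B$ the curve $\pa D_R^-$ is $C^2$, so standard potential theory gives $u^s\in C^2(D_+\cap B_R)$ and $u^s\in C$ up to $\Gamma_R\ba\{x_A,x_B\}$; near the two corners I would invoke Remark \ref{re2_nr} (which records, via \cite{Kress1999} and the jump relations, that the relevant operators are bounded in $C(\pa D_R^-)$ and that the double-layer potential extends continuously up to the boundary including the corners). Outside $B_R$ the field $u^s$ is a layer potential with a $C^2$ density over the fixed curve $\pa D_R^-$ evaluated at points bounded away from that curve, hence smooth there; so globally $u^s\in C^2(D_+)\cap C(\ov{D}_+)$.

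The substantive step is to show $u^s=f$ on $\Gamma$, i.e. on $\Gamma_R$ (recall $f=0$ on $\Gamma_0$, while by construction the representation is only built over $\pa D_R^-$). Here is where the reflection trick does the work. Define $w(x):=u^s(x)+u^s(x^{re})$ for $x$ in a neighbourhood of $\Gamma_0$ inside $\R^2$. Since $u^s$ solves the Helmholtz equation on both sides of $\Gamma_0\ba[x_A,x_B]$... actually the cleaner route: let $v(x):=u^s(x)+u^s(\psi^{re}(x))$-type combination and observe that $P\varphi=g$ is exactly the statement of the jump relations applied to $u^s(x)+u^s(\psi^{re}(x))$ on $\pa D_R^-$. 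Concretely, taking the limit of \eqref{eq10_nr} from inside $D_R^+$ onto $\Gamma_R$ and using the jump $(\mathcal{D}_k\varphi)_+=K_k\varphi+\tfrac12\varphi$, and separately evaluating $u^s(x^{re})$ for $x\in\Gamma_R$ (where $x^{re}=x$ so this contributes the $K^{re}_k,S^{re}_k$ terms at $x\in\Gamma_R$), one reads off that the left-hand side of $P\varphi=g$ on $\Gamma_R$ equals $u^s(x)+u^s(x)=2u^s(x)$ restricted to $\Gamma_R$ — no, more carefully, $u^s(x)+u^s(x^{re})$ which on $\Gamma_R$ is $2u^s(x)$ only when $\Gamma_R\subset\{x_2=0\}$; in general on the curved part it is $u^s(x)+u^s(x^{re})$ with $x^{re}\ne x$, and the $K^{re}_k,S^{re}_k$ operators are precisely designed so that $(\mathcal{D}_k\varphi-i\eta\mathcal{S}_k\varphi)(x^{re})=(K^{re}_k\varphi-i\eta S^{re}_k\varphi)(x)$ for $x\in\Gamma_R$ (the reflected point lies off $\pa D_R^-$, so no jump term appears). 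Hence on $\Gamma_R$ the equation $P\varphi=g$ reads $u^s(x)+u^s(x^{re})=-2(u^i(x)+u^r(x))$.

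It then remains to convert this relation into $u^s=f=-(u^i+u^r)$ on $\Gamma_R$, which uses the symmetry $u^s(x)=-u^s(x^{re})$ built into the extension. The point is that the extended field, defined on $\R^2_-\ba\ov B_R$ by odd reflection and represented in $\R^2\ba\pa D_R^-$ by \eqref{eq10_nr}, must be checked to be globally consistent: I would show that $u^s(x)+u^s(x^{re})$, as a function defined near $\Gamma_0\cup\pa B_R^-$, solves the Helmholtz equation there, vanishes on $\pa B_R^-\cup\{x_A,x_B\}$ by the second line of $P\varphi=g$, and satisfies a radiation-type decay, so by a uniqueness argument (Rellich/unique continuation, or simply the fact that the single- and double-layer potentials over $\pa D_R^-$ together with their reflections produce a function odd about $x_2=0$ wherever both sides are solutions) one gets $u^s(x)=-u^s(x^{re})$ throughout, whence $2u^s(x)=u^s(x)-u^s(x^{re})$... — the honest version is: the representation \eqref{eq10_nr} is manifestly not odd, so instead one argues directly that the function $U(x):=u^s(x)+u^s(x^{re})$ on the doubled domain is a radiating solution vanishing on the whole auxiliary curve $\pa B_R^-\cup\Gamma_R$ except that on $\Gamma_R$ it equals $-2(u^i+u^r)$; comparing with the original problem (DP), whose solution $u$ similarly satisfies $u(x)+u(x^{re})$-type identities, and invoking the uniqueness Theorem \ref{thm1-uni}, forces $u^s$ restricted to $D_+$ to be the genuine scattered field, in particular $u^s=f$ on $\Gamma$. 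I expect this last identification — turning the single scalar equation $u^s(x)+u^s(x^{re})=g$ into the two separate facts "$u^s=f$ on $\Gamma$" and "the reflected extension is consistent" — to be the main obstacle, and it is handled by a uniqueness argument on the bounded domain $D_R^-$: the combined potential restricted there solves a homogeneous interior problem (with the coupling parameter $\eta\ne0$ ensuring no spurious resonances, exactly as in the classical Brakhage–Werner / Colton–Kress analysis), so the density $\varphi$ and hence $u^s$ are pinned down uniquely, and the boundary data match by construction.
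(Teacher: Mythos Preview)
Your proposal contains a genuine misreading that sends the argument off track. Look again at the definitions of $S_k^{re}$ and $K_k^{re}$: for $x\in\Gamma_R$ they are evaluated at $x$ itself, not at $x^{re}$; equivalently, the map $\psi^{re}$ satisfies $\psi^{re}(x)=x$ for \emph{every} $x\in\Gamma_R$, including the curved perturbation part. Consequently on $\Gamma_R$ one has $K_k^{re}=K_k$ and $S_k^{re}=S_k$, and the first line of $P\varphi=g$ reads
\[
\varphi(x)+2\big(K_k\varphi-i\eta S_k\varphi\big)(x)
=2\Big(\tfrac12\varphi(x)+K_k\varphi(x)-i\eta S_k\varphi(x)\Big)
=2\,u^s_+(x)=-2\big(u^i(x)+u^r(x)\big),
\]
so $u^s=f$ on $\Gamma_R$ drops out immediately from the jump relation. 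Your first instinct (``$u^s(x)+u^s(x)=2u^s(x)$'') was correct; the retraction to ``only when $\Gamma_R\subset\{x_2=0\}$'' is where you go wrong, and the subsequent attempt to repair this via an interior Brakhage--Werner argument on $D_R^-$ is unnecessary and does not do what you want.

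What actually remains to be proved is that $u^s=0$ on $\Gamma\setminus\Gamma_R$, i.e.\ on the unperturbed part of the plane outside $B_R$. This is where the reflection and a uniqueness argument enter, and the paper's route is clean: the second line of $P\varphi=g$ says $u^s(x)+u^s(x^{re})=0$ on $\partial B_R^-$, hence (by symmetry) on all of $\partial B_R$. Set $\tilde u^s(x):=-u^s(x^{re})$ in $\R^2\setminus\overline{B_R}$; then $\tilde u^s$ is a radiating Helmholtz solution in $\R^2\setminus\overline{B_R}$ agreeing with $u^s$ on $\partial B_R$, so by uniqueness for the \emph{exterior} Dirichlet problem on the disk one gets $\tilde u^s\equiv u^s$ outside $B_R$. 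In particular $u^s(x_1,0)=-u^s(x_1,0)$ for $|x_1|>R$, i.e.\ $u^s=0$ there. Your proposal gestures at oddness and uniqueness but never isolates this exterior Dirichlet step, and instead conflates it with the (separate, later) injectivity argument for $P$.
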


\begin{proof}
Since $\varphi\in C(\pa D_R^-)$, it follows from Remark \ref{re2_nr} that
$u^s\in C^2(\R^2\ba\ov{D}_R^-)\cap C(\R^2\ba D_R^-)$ and satisfies the Helmholtz equation
(\ref{eq1}) in $\R^2\ba\ov{D}_R^-$. In addition, $P\varphi=g$ implies that
$u^s(x)+u^s(x^{re})=0$ on $\pa B_R^-\cup\{x_A,x_B\}$ and
$u^s(x)=-\big(u^i(x)+u^r(x)\big)$ on $\Gamma_R$.

Let $\tilde{u}^s(x)=-u^s(x^{re})$ in $\R^2\ba B_R$. Then $\tilde{u}^s$ satisfies
the Helmholtz equation (\ref{eq1}) in $\R^2\ba\ov{B}_R$ and
$\tilde{u}^s(x)=u^s(x)$ on $\pa B_R$. Moreover, the uniqueness of the exterior Dirichlet
problem (see, e.g. \cite[Chapter 3]{ColtonKress1998}) implies that $\tilde{u}^s=u^s$ in
$\R^2\ba B_R$. In particular, $u^s(x)=0$ on $\G\ba\G_R$ which yields that
$u^s(x)=-(u^i+u^r)$ on $\Gamma$. The proof is thus completed.
\end{proof}

We now prove the unique solvability of the integral equation $P\varphi=g$.

\begin{theorem}\label{thm1-ie}
The integral equation $P\varphi=g$ has a unique solution $\varphi\in C(\pa D_R^-)$
satisfying the estimate
\be\label{eq13_thm1}
||\varphi||_{C(\pa D_R^-)}\leq C ||u^i+u^r||_{C(\G)}
\en
\end{theorem}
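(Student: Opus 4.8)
The plan is to use the Riesz--Fredholm theory for operator equations of the second kind in the Banach space $C(\pa D_R^-)$, exactly as is done for the classical combined single- and double-layer approach to the exterior Dirichlet problem. The first step is to write $P = A + B$ where $A$ is the invertible ``principal part'' and $B$ is compact. Looking at (\ref{eq11_nr}), the natural candidate is to take $A\varphi$ to be $\varphi$ (resp.\ $\half\varphi$) together with the \emph{genuine} combined-layer operator $K_k\varphi - i\eta S_k\varphi$ on $\pa D_R^-$ — more precisely, using the $\wid K_k$ from Remark \ref{re2_nr} so that the corner values are correctly accounted for — and to put the reflected operators $K_k^{re}\varphi - i\eta S_k^{re}\varphi$ into $B$. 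By Remark \ref{re2_nr} the operators $S_k - S_0$, $S_k^{re}-S_0^{re}$, $K_k-K_0$, $K_k^{re}-K_0^{re}$ are bounded (indeed they have weakly singular kernels, hence are compact on $C(\pa D_R^-)$); the Laplace double-layer operator $K_0$ on a piecewise-$C^2$ curve with corners of interior angle $\pi/2$ has a weakly singular kernel away from the corners and its corner contribution has been absorbed into the jump term, so $\wid K_0$ is compact; and $S_0$, together with the entire reflected block, has a continuous (or weakly singular) kernel on $\pa D_R^-$ because the reflection point $x^{re}$ never coincides with a source point $y\in\pa D_R^-$ except at $\G_R\cap(x_1\text{-axis})$, where both points lie on the $x_1$-axis and the logarithmic singularity is at worst weakly singular. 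Hence $B$ is compact, and by the standard argument $A$ is a bijection with bounded inverse (it is the combined-layer operator for the exterior problem). Therefore $P = A+B$ is Fredholm of index zero, and unique solvability follows once injectivity is established.

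The second, and main, step is injectivity: if $P\varphi = 0$ then $\varphi = 0$. Suppose $P\varphi=0$ and define $u^s$ by (\ref{eq10_nr}). By Lemma \ref{le3_nr}, $u^s$ solves the problem (DP) with $f = -(u^i+u^r) \equiv 0$, so by the uniqueness Theorem \ref{thm1-uni} we get $u^s \equiv 0$ in $D_+$. Now extend by reflection: since $u^s$ is built from potentials over $\pa D_R^-$ and, as in the discussion preceding the lemma, the reflected function $-u^s(x^{re})$ agrees with $u^s$ on $\pa B_R$ and solves the Helmholtz equation in $\R^2\ba \ov{B}_R$, the uniqueness of the exterior Dirichlet problem forces $u^s \equiv 0$ in $\R^2\ba D_R^-$ as well, i.e.\ $u^s$ vanishes identically in the unbounded component $\R^2\ba\ov{D}_R^-$. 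Next one has to pass to the \emph{interior} domain $D_R^-$: the jump relations for the single- and double-layer potentials give, across $\pa D_R^-$,
\[
u^s_+ - u^s_- = \varphi, \qquad \frac{\pa u^s_+}{\pa\nu} - \frac{\pa u^s_-}{\pa\nu} = i\eta\,\varphi,
\]
so since $u^s_+ = 0$ and $\pa_\nu u^s_+ = 0$ we obtain that the interior field $v := u^s|_{D_R^-}$ satisfies the Helmholtz equation in $D_R^-$ with boundary data $v|_{\pa D_R^-} = -\varphi$ and $\pa_\nu v|_{\pa D_R^-} = -i\eta\varphi$ — i.e.\ an impedance boundary condition $\pa_\nu v + i\eta\, v = 0$ on $\pa D_R^-$. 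A Green's-identity (Rellich/integration-by-parts) argument over $D_R^-$ then gives $\Im\int_{\pa D_R^-}\ov v\,\pa_\nu v\,ds = -\eta\int_{\pa D_R^-}|v|^2\,ds$, and since the left-hand side equals $\Im\int_{D_R^-}(|\na v|^2 - k^2|v|^2)\,dx = 0$ for a real wavenumber, we conclude $v = 0$ on $\pa D_R^-$ (here $\eta\neq0$ is used), whence $\varphi = -v|_{\pa D_R^-} = 0$. This establishes injectivity, and Riesz--Fredholm theory then yields existence, uniqueness, and boundedness of $P^{-1}$ on $C(\pa D_R^-)$.

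The third step is the a priori estimate (\ref{eq13_thm1}): since $P^{-1}$ is bounded, $\|\varphi\|_{C(\pa D_R^-)} \le \|P^{-1}\|\,\|g\|_{C(\pa D_R^-)}$, and by (\ref{eq12_nr}) we have $\|g\|_{C(\pa D_R^-)} = 2\|u^i+u^r\|_{C(\G_R)} \le 2\|u^i+u^r\|_{C(\G)}$, which gives the claim with $C = 2\|P^{-1}\|$. The delicate point to watch throughout — and the place I expect the technical work to concentrate — is the behaviour of all four operators \emph{at the two corners} $x_A, x_B$: one must verify that the corner of angle $\pi/2$ does not destroy compactness of $\wid K_0$, that the limiting jump values quoted in Remark \ref{re2_nr} are the right ones to make $P$ genuinely a second-kind operator with the stated $\half$ on $\pa B_R^-\cup\{x_A,x_B\}$, and that the reflected kernels $\Phi_k(x^{re},y)$ and $\pa_{\nu(y)}\Phi_k(x^{re},y)$ remain at worst weakly singular when $x\in\G_R$ approaches a corner and $y$ approaches the same corner from $\pa B_R^-$ (the reflection then maps a neighbourhood on $\G_R$ to itself, so one is back to the usual weakly singular situation). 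Apart from these corner estimates, which are supplied by the cited results of Kress and the references in Remark \ref{re2_nr}, the argument is the standard combined-potential uniqueness/Fredholm scheme.
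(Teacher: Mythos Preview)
Your injectivity argument (Step~2) and the a~priori estimate (Step~3) are essentially correct and agree with the paper; the paper finishes Step~2 by observing that $u^s_+=0$ on $\pa D_R^-$ gives $(\alpha I+K_k-i\eta S_k)\varphi=0$ and citing Ruland~\cite{Ruland1978} for the invertibility of this standard combined-layer operator, which is equivalent to your interior-impedance argument via Green's identity.

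The genuine gap is in your Fredholm step. The claim that $\wid K_0$ is compact on $C(\pa D_R^-)$ is \emph{false}: on a boundary with corners the double-layer kernel $\pa_{\nu(y)}\Phi_0(x,y)=-\frac{1}{2\pi}\frac{\nu(y)\cdot(x-y)}{|x-y|^2}$ is not weakly singular when $x$ and $y$ lie on different arms of the same corner. The estimate $|\nu(y)\cdot(x-y)|\le C|x-y|^2$, which holds on each $C^2$ arc separately and makes the kernel bounded there, fails across the corner; the resulting contribution is bounded on $C(\pa D_R^-)$ but not compact (the corner correction in $\wid K_0$ changes the operator only at two points and cannot restore compactness). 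The same obstruction hits your proposed $B$: on $\G_R$ the ``reflected'' operators $K_k^{re},S_k^{re}$ coincide \emph{by definition} with $K_k,S_k$, so $B$ contains a full copy of the non-compact $K_k$ near each corner, and the splitting $P=A+B$ with $B$ compact cannot be achieved. (Relatedly, your $A$ carries identity coefficient $1$ on $\G_R$ and $\half$ on $\pa B_R^-$, so it is not the standard exterior combined-layer operator either.)

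The paper handles this with the standard localization technique of \cite[Section~3.5]{ColtonKress1998}: write $P=(I_\chi+M_{0,r})+(\text{compact})$, where $M_{0,r}$ isolates the double-layer contributions (direct and reflected) supported in $r$-neighbourhoods of $x_A,x_B$, and then prove by direct estimate that $\|M_{0,r}\|<1$ in a suitable equivalent norm for $r$ small enough, so that $I_\chi+M_{0,r}$ is invertible by a Neumann series and $P$ is Fredholm of index zero. The bound $\|M_{0,r}\|\le Cr+3/4$ uses explicitly that the interior angles are $\pi/2$, together with a cancellation between the direct and reflected kernels for $x\in\pa B_R^-$ near the corners. This is precisely the ``technical work at the corners'' you anticipated, but it cannot be bypassed by asserting compactness of $K_0$.
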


\begin{proof}
We need to deal with the corners.
Following the idea in \cite{ColtonKress1998}, we introduce the following boundary integral
operators: for $z=x_A,x_B$
\ben
K_{0,z}\varphi(x)&\coloneqq&\int_{\pa D_R^-}\frac{\pa\Phi_0(x,y)}{\pa\nu(y)}\varphi(z)ds(y),
  \quad x\in\pa D_R^-\\
K^{re}_{0,z}\varphi(x)&\coloneqq&\left\{\begin{array}{l}
\ds\int_{\pa D_R^-}\frac{\pa\Phi_0(x,y)}{\pa\nu(y)}\varphi(z)ds(y),\quad x\in\Gamma_R\\
\ds\int_{\pa D_R^-}\frac{\pa\Phi_0(x^{re},y)}{\pa\nu(y)}\varphi(z)ds(y),
  \quad x\in\pa B_R^-\cup\{x_A,x_B\}
  \end{array}\right.
\enn
For $z=x_A,x_B$ define $B_\vep(z)\coloneqq\{x\in\R^2\;|\;|x-z|<\vep\}$ with radius $\vep$
small enough such that $B_\vep(z)\cap(\G\ba\G_0)=\emptyset$.
Choose a cut-off function $\chi\in C^\infty_0(\R^2)$ satisfying that $0\leq\chi\leq 1$,
$\chi(x)=\chi(x^{re})$, $\chi=1$ in $B_\vep(x_A)$ and $\chi=0$ in $B_\vep(x_B)$.
Since $K_{0,z}\varphi$ vanishes in $\R^2\ba\ov{D}_R^-$ for $z=x_A,x_B$,
we can rewrite (\ref{eq10_nr}) in the following form:
\ben
u^s(x)&=&\chi(x)\left[\int_{\pa D_R^-}\left(\frac{\pa\Phi_k(x,y)}{\pa\nu(y)}
         -i\eta\Phi_k(x,y)\right)\varphi(y)ds(y)
         -\int_{\pa D_R^-}\frac{\pa\Phi_0(x,y)}{\pa\nu(y)}\varphi(x_A)ds(y)\right]\\
      &&+[1-\chi(x)]\left[\int_{\pa D_R^-}\left(\frac{\pa\Phi_k(x,y)}{\pa\nu(y)}
        -i\eta\Phi_k(x,y)\right)\varphi(y)ds(y)\right.\\
      &&\qquad\qquad\left.-\int_{\pa D_R^-}\frac{\pa\Phi_0(x,y)}{\pa\nu(y)}\varphi(x_B)ds(y)\right],
        \quad x\in\R^2\ba\ov{D}_R^-
\enn
Accordingly, using the jump relations of the layer potentials and the fact that $\chi(x)=\chi(x^{re})$,
we rewrite $P\varphi$, defined in (\ref{eq11_nr}), as
$P\varphi=I_{\chi}\varphi+A\varphi+B\varphi$, where
\ben
(I_{\chi}\varphi)(x)&\coloneqq&\left\{\begin{array}{ll}
\ds\varphi(x), &x\in\Gamma_R\\
\ds(1/2)[\varphi(x)+\chi\varphi(x_A)+(1-\chi)\varphi(x_B)], &x\in\pa B_R^-\cup\{x_A,x_B\}
      \end{array} \right.\\
(A\varphi)(x)&\coloneqq& -\chi(x)\varphi(x_A)-[1-\chi(x)]\varphi(x_B)\\
(B\varphi)(x)&\coloneqq& \chi(x)\left(K_k\varphi-i\eta S_k\varphi-K_{0,x_A}\varphi\right)(x)
                         +[1-\chi(x)]\left(K_k\varphi-i\eta S_k\varphi-K_{0,x_B}\varphi\right)(x)\\
             &&\qquad+\chi(x)\left(K^{re}_k\varphi-i\eta S^{re}_k\varphi
               -K^{re}_{0,x_A}\varphi\right)(x)+[1-\chi(x)]\left(K^{re}_k\varphi
               -i\eta S^{re}_k\varphi-K^{re}_{0,x_B}\varphi\right)(x)
\enn
Here, $I_{\chi},A$ are bounded in $C(\pa D_R^-)$. From Remark \ref{re2_nr},
$B$ is also bounded in $C(\pa D_R^-)$.

{\bf Step 1.} We show that $P$ is a Fredholm operator of index zero.

Let
\ben
(M_0\varphi)(x)&\coloneqq&\chi(x)\left(K_0\varphi-K_{0,x_A}\varphi\right)(x)
     +[1-\chi(x)]\left(K_0\varphi-K_{0,x_B}\varphi\right)(x)\\
    &&+\chi(x)\left(K^{re}_0\varphi-K^{re}_{0,x_A}\varphi\right)(x)
      +[1-\chi(x)]\left(K^{re}_0\varphi-K^{re}_{0,x_B}\varphi\right)(x)
\enn
From Remark \ref{re2_nr} it is easy to see that $M_0$ is bounded in $C(\pa D_R^-)$.
Since the integral operator $B-M_0$ has a weakly singular kernel and $\psi^{re}$ is a
continuous mapping, the operator $B-M_0$ is compact in $C(\pa D_R^-)$.
Thus, $P-(I_\chi+M_0)=A+B-M_0$ is compact in $C(\pa D_R^-)$.

Moreover, for $z=x_A,x_B$ and $0<r<\vep$ choose a cut-off function $\psi_{r,z}\in C^\infty_0(\R^2)$
satisfying that $0\leq\psi_{r,z}\leq 1$, $\psi_{r,z}(x)=1$ in the region $0\leq|x-z|\leq r/2$ and
$\psi_{r,z}(x)=0$ in the region $r\leq|x-z|<\infty$.
Define $M_{0,r}:\;C(\pa D_R^-)\rightarrow C(\pa D_R^-)$ by
\ben
M_{0,r}\varphi&\coloneqq&\psi_{r,x_A}\chi\Big[K_0\big(\psi_{r,x_A}\varphi\big)
     -K_{0,x_A}\big(\psi_{r,x_A}\varphi(x_A)\big)\Big]\\
   &&+\psi_{r,x_B}(1-\chi)\Big[K_0\big(\psi_{r,x_B}\varphi\big)
     -K_{0,x_B}\big(\psi_{r,x_B}\varphi(x_B)\big)\Big]\\
   &&+\psi_{r,x_A}\chi\Big[K^{re}_0\big(\psi_{r,x_A}\varphi\big)
     -K^{re}_{0,x_A}\big(\psi_{r,x_A}\varphi(x_A)\big)\Big]\\
   &&+\psi_{r,x_B}(1-\chi)\Big[K^{re}_0\big(\psi_{r,x_B}\varphi\big)
     -K^{re}_{0,x_B}\big(\psi_{r,x_B}\varphi(x_B)\big)\Big],\qquad \varphi\in C(\pa D_R^-).
\enn
Since the kernel of $M_{0,r}-M_0$ vanishes in a neighborhood of $(x_A,x_A)$ and $(x_B,x_B)$,
it is compact in $C(\pa D_R^-)$. Thus $P-(I_{\chi}+M_{0,r})$ is compact in $C(\pa D_R^-)$
since $P-(I_{\chi}+M_0)$ is compact in $C(\pa D_R^-)$.

We now introduce the following norm on $C(\pa D_R^-):$
\ben
||\varphi||_{\infty,0}&:=&\max\left\{\max\limits_{\G_R}\Big[\left|\chi\left(\varphi
    -\varphi(x_A)\right)\right|+\left|(1-\chi)\left(\varphi-\varphi(x_B)\right)\right|
    +|\varphi(x_A)|+|\varphi(x_B)|\Big]\right.,\\
&&\left.\max\limits_{\pa B_R^-\cup\{x_A,x_B\}}\left[\left|\half\chi\left(\varphi
   -\varphi(x_A)\right)\right|+\left|\half(1-\chi)\left(\varphi-\varphi(x_B)\right)\right|
   +|\varphi(x_A)|+|\varphi(x_B)|\right]\right\}
\enn
which is equivalent to the maximum norm $||\cdot||_\infty$. It is easy to see that
$I_{\chi}$ is a bijection from $C(\pa D_R^-)$ to $C(\pa D_R^-)$ with
\ben
I_{\chi}^{-1}\psi=\left\{
\begin{array}{ll}
\ds \psi, &x\in\G_R\\
\ds 2\psi-\chi\psi(x_A)-(1-\chi)\psi(x_B), &x\in\pa B_R^-\cup\{x_A,x_B\}
\end{array}\right.
\enn
and $||I_{\chi}\varphi||_\infty\leq||\varphi||_{\infty,0}$ for all $\varphi\in C(\pa D_R^-)$.
Furthermore, choose a function $\phi_0\in C(\pa D_R^-)$ such that $\phi_0\geq0$,
$\phi_0(x_A)=\phi_0(x_B)=0$ and $\phi_0$ reaches its maximum on $\G_R$.
Then $||I_{\chi}\phi_0||_{\infty}=||\phi_0||_{\infty,0}$, which implies that
$||I_{\chi}||_{C(\pa D_R^-,||\cdot||_{\infty,0})\rightarrow C(\pa D_R^-,||\cdot||_\infty)}=1$.

We now prove that $||M_{0,r}||_{C(\pa D_R^-,||\cdot||_{\infty,0})\rightarrow
C(\pa D_R^-,||\cdot||_\infty)}<1$ for $r>0$ small enough.
For any $\varphi\in C(\pa D_R^-)$, $\textrm{supp}(M_{0,r}\varphi)\subset\ov{B_r(x_A)\cup B_r(x_B)}$,
so we only need to consider $(M_{0,r}\varphi)(x)$
for $x\in\pa D_R^-\cap\big(B_r(x_A)\cup B_r(x_B)\big)$.
Note first that for $x\in\pa D_R^-\cap B_r(x_A)$ we have
\ben
(M_{0,r}\varphi)(x)&=&\psi_{r,x_A}\Big[K_0\big(\psi_{r,x_A}\varphi\big)
-K_{0,x_A}\big(\psi_{r,x_A}\varphi(x_A)\big)\Big]\\
&=&\psi_{r,x_A}\Big[K^{re}_0\big(\psi_{r,x_A}\varphi\big)
-K^{re}_{0,x_A}\big(\psi_{r,x_A}\varphi(x_A)\big)\Big].
\enn
Then the required estimate can be obtained by following the idea in \cite[Section 3.5]{ColtonKress1998},
together with the inequality:
\be\label{eq17_nr}
|\nu(y)\cdot(x-y)|\leq C|x-y|^2
\en
for $x\in\G_R\cup\{x_A,x_B\},\;y\in\G_R$ or for $x\in\pa B_R,\;y\in\pa B_R^-.$

When $x=x_A$, using (\ref{eq17_nr}) we have
\ben
|(M_{0,r}\varphi)(x_A)|\leq C r||\varphi||_{\infty,0}
\enn

When $x\in\G_R\cap B_r(x_A)$, the integral $(M_{0,r}\varphi)(x)$ can be split up into two parts:
the first one over $\G_R\cap B_r(x_A)$ and the second one over $\pa B_R^-\cap B_r(x_A)$.
The first part can be bounded with the upper bound $Cr||\varphi||_{\infty,0}$, estimated directly
using (\ref{eq17_nr}).
Noting that $\nu(y)\cdot(x-y)$ does not change its sign for $x\in\G_R\cap B_r(x_A)$ and
$y\in\pa B_R^-\cap B_r(x_A)$, we have that for $x\in\G_R\cap B_r(x_A)$ the second part is bounded by
\ben
&&2\left|\psi_{r,x_A}(x)\int_{\pa B_R^-}\left[\frac{\pa\Phi_0(x,y)}{\pa\nu(y)}
      \psi_{r,x_A}(y)\big(\varphi(y)-\varphi(x_A)\big)\right]ds(y)\right|\\
&&\qquad\le2\int_{\pa B_R^-\cap B_r(x_A)}\left|\frac{\pa\Phi_0(x,y)}{\pa\nu(y)}\right|ds(y)
      ||\varphi||_{\infty,0}\\
&&\qquad=2\left|\int_{\pa B_R^-\cap B_r(x_A)}\frac{\pa\Phi_0(x,y)}{\pa\nu(y)}ds(y)\right|
   ||\varphi||_{\infty,0}\\
&&\qquad=\frac{\alpha(x)}{\pi}||\varphi||_{\infty,0}
\enn
where $\alpha(x)$ is the angle between the two segments connecting $x$ and the two endpoints of
the arc $\pa B_R^-\cap B_r(x_A)$.
Since the interior angle at $x_A$ is $\pi/2$, we have $\alpha(x)\le3\pi/4$ if $r$ is small enough.
Thus, for $x\in\G_R\cap B_r(x_A)$ we have
\ben
|(M_{0,r}\varphi)(x)|\leq (C r+3/4)||\varphi||_{\infty,0}
\enn

When $x\in\pa B_R^-\cap B_r(x_A)$, the integral $(M_{0,r}\varphi)(x)$ can also be split up into
two parts: the first one over $\G_R\cap B_r(x_A)$ and the second one over $\pa B_R^-\cap B_r(x_A)$.
Since ${\pa\ln|x-y|}/{\pa\nu(y)}={-x_2}/{|x-y|^2}$ for $y\in\G_R\cap B_r(x_A)$,
then, for $x\in\pa B_R^-\cap B_r(x_A)$ the first part is equal to
\ben
&&\psi_{r,x_A}(x)\int_{\G_R}\left[\frac{\pa\Phi_0(x,y)}{\pa\nu(y)}\psi_{r,x_A}(y)
   \big(\varphi(y)-\varphi(x_A)\big)\right]ds(y)\\
&&\quad+\psi_{r,x_A}(x)\int_{\G_R}\left[\frac{\pa\Phi_0(x^{re},y)}{\pa\nu(y)}\psi_{r,x_A}(y)
   \big(\varphi(y)-\varphi(x_A)\big)\right]ds(y)=0,
\enn
whilst the second part can be estimated using (\ref{eq17_nr}) and is
bounded with the upper bound $Cr||\varphi||_{\infty,0}$.
Thus, for $x\in\pa B_{R-}\cap B_r(x_A)$ we have
\ben
|(M_{0,r}\varphi)(x)|\leq C r||\varphi||_{\infty,0}
\enn

Similar estimates can be obtained for $(M_{0,r}\varphi)(x)$ when $x\in\pa D_R^-\cap B_r(x_B)$.
Thus we have the estimate
\ben
|(M_{0,r}\varphi)(x)|\leq (C r+3/4)||\varphi||_{\infty,0}
\enn
for $x\in\pa D_R^-\cap\big(B_r(x_A)\cup B_r(x_B)\big)$.
Choosing $r>0$ small enough we obtain that
$||M_{0,r}||_{C(\pa D_R^-,||\cdot||_{\infty,0})\rightarrow C(\pa D_R^-,||\cdot||_{\infty})}<1$.
Then, by the Neumann series, $I_\chi+M_{0,r}$ has a bounded inverse in
$C(\pa D_R^-)$, yielding that $P$ is a Fredholm operator of index zero
since $P=[P-(I_\chi+M_{0,r})]+I_\chi+M_{0,r}$ and $P-(I_\chi+M_{0,r})$ is compact in $C(\pa D_R^-)$.

{\bf Step 2.} We prove that $P$ is injective and therefore invertible in $C(\pa D_R^-)$.

Let $P\varphi=0$ for $\varphi\in C(\pa D_R^-)$. Then, by Lemma \ref{le3_nr}, the scattered field $u^s$
defined by (\ref{eq10_nr}) vanishes on the boundary $\G$. From Theorem \ref{thm1-uni}
and Holmgren's uniqueness theorem, $u^s$ vanishes in $\R^2\ba{D_R^-}$.
On the other hand, by the jump relations of the layer potentials (see Remark \ref{re2_nr}),
we have $(P_0\varphi)(x)=0$ for $x\in\pa D_R^-$,
where $(P_0\varphi)(x):=\alpha(x)\varphi(x)+(K_k\varphi)(x)-i\eta(S_k\varphi)(x)$
is bounded in $C(\pa D_R^-)$ with $\alpha(x)={1}/{2}$ for $x\in\pa D_R^-\ba\{x_A,x_B\}$
and $\alpha(x)={\g(x)}/({2\pi})$ for $x=x_A,\;x_B$.
It was proved in \cite{Ruland1978} that $P_0$ has a bounded inverse in $C(\pa D_R^-)$,
yielding that $\varphi=0$.

By the Fredholm alternative, $P$ is invertible with a bounded inverse $P^{-1}$ in $C(\pa D_R^-)$.
The proof is thus completed.
\end{proof}

Combining Lemma \ref{le3_nr}, Theorems \ref{thm1-uni} and \ref{thm1-ie} and the mapping
properties of the single- and double-layer potentials in the space of continuous functions,
we get the following result on the well-posedness of the problem (DP).

\begin{theorem}\label{thm1-wp}
The problem (DP) has a unique solution $u^s\in C^2(D_+)\cap C(\ov{D}_+)$. Furthermore,
\be\label{eq13_nr}
||u^s||_{C(\ol{D}_+)}\leq C ||u^i+u^r||_{C(\Gamma)}.
\en
\end{theorem}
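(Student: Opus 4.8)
The plan is simply to combine the three facts already in hand. Uniqueness of a solution of (DP) in $C^2(D_+)\cap C(\ov{D}_+)$ is Theorem \ref{thm1-uni}, so nothing is needed there. For existence, recall that $f=-(u^i+u^r)$ has compact support on $\G$ and, with $R$ chosen large enough, this support lies in $\G_R$; hence the right-hand side $g$ in (\ref{eq12_nr}) lies in $C(\pa D_R^-)$. By Theorem \ref{thm1-ie} the integral equation $P\varphi=g$ has a unique solution $\varphi\in C(\pa D_R^-)$, and by Lemma \ref{le3_nr} the combined potential (\ref{eq10_nr}) built from this $\varphi$ belongs to $C^2(D_+)\cap C(\ov{D}_+)$ and solves (DP) (the Sommerfeld condition being automatic, since $\mathcal{S}_k\varphi$ and $\mathcal{D}_k\varphi$ are radiating solutions of the Helmholtz equation in $\R^2\ba\pa D_R^-$). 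This already gives the first assertion.

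It remains to prove the a priori bound (\ref{eq13_nr}). Since $D_+$ and the open set $D_R^-$ are disjoint, $\ov{D}_+\subset\R^2\ba D_R^-$, and $\G_R\subset\pa D_R^-$ is reached from the $D_+$ side, i.e. the exterior side used in Remark \ref{re2_nr}. By that remark, $\mathcal{S}_k\varphi$ is continuous on all of $\R^2$ and $\mathcal{D}_k\varphi$ extends continuously from $\R^2\ba\ov{D}_R^-$ to $\R^2\ba D_R^-$, with the limiting values recorded there (the corner points $x_A,x_B$ only changing the jump coefficient from $1/2$ to $\gamma(x)/2\pi=1/4$). Consequently $\varphi\mapsto u^s|_{\ov{D}_+}$ is a bounded linear map from $C(\pa D_R^-)$ into $C(\ov{D}_+)$, so that $\|u^s\|_{C(\ov{D}_+)}\le C\|\varphi\|_{C(\pa D_R^-)}$. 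Combining this with the estimate (\ref{eq13_thm1}) of Theorem \ref{thm1-ie}, and using that $u^i+u^r$ vanishes on $\G\ba\G_R$ so that $\|u^i+u^r\|_{C(\G)}=\|u^i+u^r\|_{C(\G_R)}$, yields (\ref{eq13_nr}).

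I expect no real obstacle here: the substantive work --- the Fredholm theory at the two corners and the injectivity argument based on Theorem \ref{thm1-uni} and Holmgren's theorem --- was already carried out in the proof of Theorem \ref{thm1-ie}. The only mildly delicate point is the boundedness, in the supremum norm, of the double-layer potential up to the corners $x_A$ and $x_B$; but this is precisely what Remark \ref{re2_nr} supplies, so the argument reduces to bookkeeping.
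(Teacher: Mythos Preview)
Your proposal is correct and follows exactly the route the paper indicates: the paper does not write out a proof at all but simply states that Theorem~\ref{thm1-wp} follows by ``combining Lemma~\ref{le3_nr}, Theorems~\ref{thm1-uni} and~\ref{thm1-ie} and the mapping properties of the single- and double-layer potentials in the space of continuous functions.'' You have merely spelled out that bookkeeping, including the use of Remark~\ref{re2_nr} for continuity of the potentials up to the corners and the chaining of the estimate~(\ref{eq13_thm1}) with the boundedness of $\varphi\mapsto u^s|_{\ov{D}_+}$.
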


\begin{remark}\label{re3} {\rm
By the asymptotic behavior of the fundamental solution $\Phi_k$, we have the following far
field pattern for the scattered field $u^s$ given by (\ref{eq10_nr}):
\be\label{eq14_nr}
u^\infty(\hat{x})=\frac{e^{-i\pi/4}}{\sqrt{8\pi k}}\int_{\pa D_R^-}
[k\nu(y)\cdot\hat{x}+\eta]e^{-ik\hat{x}\cdot y}\varphi(y)ds(y)
\en
which is an analytic function on the unit circle $S$, where $\varphi$ is the solution of the
integral equation $P\varphi=g$.
}
\end{remark}

\section{Numerical solution of the novel integral equation}\label{sec3}
\setcounter{equation}{0}

We make use of the Nystr\"{o}m method with a graded mesh introduced in
\cite[Section 3.5]{ColtonKress1998} (see also \cite{Kress1990}) to solve the integral
equation $P\varphi=g$. Let $\pa D_R^-$ be parameterized as
$x(s)=(x_1(s),x_2(s)),\;0\leq s\leq 2\pi$ such that $(x_1(0),x_2(0))=x_B$ and
$(x_1(\pi),x_2(\pi))=x_A$, where
\ben
x_1(s)&=&\left\{\begin{array}{ll}
\ds -{2R}\omega(s)/{\pi}+R, &0\le s\le\pi\\
\ds R\cos(\omega(s)), &\pi<s\leq2\pi
    \end{array}\right.\\
x_2(s)&=&\left\{\begin{array}{ll}
\ds h_\G(-{2R}\omega(s)/{\pi}+R), &0\leq s\leq\pi\\
\ds R\sin(\omega(s)), &\pi<s\leq2\pi
\end{array}\right.
\enn
Here, $\omega:[0,2\pi]\rightarrow[0,2\pi]$ is a strictly, monotonically increasing function
satisfying that $\omega(s)=\pi{[v(s)]^p}/({[v(s)]^p+[v(\pi-s)]^p})$ for $0\leq s\leq\pi$ and
$\omega(s)=\omega(s-\pi)+\pi$ for $\pi<s\leq2\pi$, where
\ben
v(s)=\left(\frac{1}{p}-\half\right)\left(\frac{\pi-2s}{\pi}\right)^3
+\frac{1}{p}\cdot\frac{2s-\pi}{\pi}+\half
\enn
with $p=4$. Note that $\omega(0)=0,\;\omega(\pi)=\pi$ and $\omega'(0)=\omega'(\pi)=0$.
Then the integral equation $P\varphi=g$
can be rewritten as $P\varphi(x(t))=g(x(t))$, where
\ben
P\varphi(x(t))=\left\{\begin{array}{ll}
\ds \varphi(x(t))+2\int^{2\pi}_0\left[\frac{\pa\Phi_k(x(t),x(s))}{\pa\nu(x(s))}
     -i\eta\Phi_k(x(t),x(s))\right]|x'(s)|ds, &0< t<\pi\\
\ds \half\varphi(x(t))+\int^{2\pi}_0\left[\frac{\pa\Phi_k(x(t),x(s))}{\pa\nu(x(s))}
    -i\eta\Phi_k(x(t),x(s))\right]|x'(s)|ds\\
\ds \quad\quad\quad+\int^{2\pi}_0\left[\frac{\pa\Phi_k(x^{re}(t),x(s))}{\pa\nu(x(s))}
    -i\eta\Phi_k(x^{re}(t),x(s))\right]|x'(s)|ds, &\pi\leq t\leq 2\pi
    \end{array}\right.
\enn
For $j=0,1,\ldots,2n-1$ with $n\in\mathds{N}$ and $n>0$, let $t_j=j\pi/n$ and $g_j=g(x(t_j))$.
Then we get the approximation value $\varphi^{(n)}_j$ of $\varphi$ at the points $x(t_j)$
by solving the linear system
\ben
\half\varphi^{(n)}_i+2\sum_{j=1,j\neq n}^{2n-1}\left(R^{(n)}_{|i-j|}K_1(t_i,t_j)+\frac{\pi}{n}
    K_2(t_i,t_j)\right)\varphi^{(n)}_j&=&g_i,\qquad i=0,n\\
\varphi^{(n)}_i+2\sum_{j=1,j\neq n}^{2n-1}\left(R^{(n)}_{|i-j|}K_1(t_i,t_j)+\frac{\pi}{n}
    K_2(t_i,t_j)\right)\varphi^{(n)}_j&=&g_i,\qquad i=1,\cdots,n-1\\
\varphi^{(n)}_i+\sum_{j=1,j\neq n}^{2n-1}\left(R^{(n)}_{|i-j|}K_1(t_i,t_j)+\frac{\pi}{n}
    K_2(t_i,t_j)+\frac{\pi}{n} K_3(t_i,t_j)\right)\varphi^{(n)}_j&=&g_i,\qquad i=n+1,\cdots,2n-1
\enn
where, for $j=0,1,\dots,2n-1$
\ben
R^{(n)}_j\coloneqq-\frac{2\pi}{n}\sum^{n-1}_{m=1}\frac{1}{m}\cos\left(\frac{mj\pi}{n}\right)
-\frac{(-1)^j\pi}{n^2}
\enn
and for $i=0,1,\dots,2n-1,\;j=1,2,\cdots,n-1,n+1,\cdots,2n-1$
\ben
K(t_i,t_j)&\coloneqq&\left[\frac{\pa\Phi_k(x(t_i),x(t_j))}{\pa\nu(x(t_j))}
   -i\eta\Phi_k(x(t_i),x(t_j))\right]|x'(t_j)|\\
K_1(t_i,t_j)&\coloneqq&-\frac{1}{4\pi}\left[\frac{\pa J_0(k|x(t_i)-x(t_j)|)}{\pa\nu(x(t_j))}
   -i\eta J_0(k|x(t_i)-x(t_j)|)\right]|x'(t_j)|\\
K_2(t_i,t_j)&\coloneqq& K(t_i,t_j)-K_1(t_i,t_j)\ln\left(4\sin^2\frac{t_i-t_j}{2}\right)\\
K_3(t_i,t_j)&\coloneqq&\left[\frac{\pa\Phi_k(x^{re}(t_i),x(t_j))}{\pa\nu(x(t_j))}
   -i\eta\Phi_k(x^{re}(t_i),x(t_j))\right]|x'(t_j)|
\enn
Here, $J_0$ is the Bessel function of order $0$. Further, (\ref{eq14_nr}) can be rewritten as
\ben
u^\infty(\hat{x})=\frac{e^{-i\pi/4}}{\sqrt{8\pi k}}\int_{0}^{2\pi}
  \left[k\nu(x(s))\cdot\hat{x}+\eta\right]e^{-ik\hat{x}\cdot x(s)}|x'(s)|\varphi(x(s))ds
\enn
Then for $n_f\in\mathds{N}$ with $n_f>0$ we get the approximation values of the far field pattern
at the points $\hat{x}_i={i\pi}/{n_f},$ $i=0,1,\cdots,n_f,$ by the following quadrature rule:
\ben
u^\infty(\hat{x}_i)\backsimeq\frac{e^{-i\pi/4}}{\sqrt{8\pi k}}\cdot\frac{\pi}{n}
   \sum^{2n-1}_{j=1,j\neq n}\left[k\nu(x(t_j))\cdot\hat{x}_i
   +\eta\right]e^{-ik\hat{x}_i\cdot x(t_j)}|x'(t_j)|\varphi^{(n)}_j
\enn

\section{The inverse problem}\label{sec3+}
\setcounter{equation}{0}

The {\em inverse problem} we are interested in is, given the far field pattern $u^\infty$
of the scattered wave $u^s$ of the scattering problem (\ref{eq1})-(\ref{rc})
(or the problem (DP)) for one or a finite number of incident plane waves $u^i$,
to determine the unknown locally rough surface $\G$ (or the local perturbation $\G_p$).


We have the following uniqueness theorem which can be proved by arguing similarly as in
the proof of Theorem 3.1 in \cite{Ma05}.

\begin{theorem}\label{thm-uni}
Assume that $\G_1$ and $\G_2$ are two locally rough surfaces and
$u^\infty_1(\hat{x},d)$ and $u^\infty_2(\hat{x},d)$ are the far field patterns
corresponding to $\G_1$ and $\G_2$, respectively.
If $u^\infty_1(\hat{x},d_n)=u^\infty_2(\hat{x},d_n)$ for all $\hat{x}\in S_+$
and $d_n\in S_-$ with $n\in\N$ and a fixed wave number $k$,
then $\Gamma_1=\Gamma_2$.
\end{theorem}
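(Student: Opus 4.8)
The plan is to establish uniqueness by a contradiction argument based on the classical Schiffer-type technique, adapted to the locally rough surface setting as in \cite{Ma05}. Suppose $\G_1 \neq \G_2$ but their far field patterns coincide for the incident directions $d_n$, $n\in\N$. By Rellich's lemma and the analyticity of the scattered fields (which follows from the integral representation in Remark~\ref{re3} and unique continuation), the scattered fields $u^s_1(\cdot,d_n)$ and $u^s_2(\cdot,d_n)$ must coincide in the unbounded connected component of $D_+^{(1)}\cap D_+^{(2)}$ (the region above both surfaces), hence the total fields agree there as well for every $n$.

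The key step is to exploit this coincidence in the domain $\Omega$ lying between $\G_1$ and $\G_2$. Without loss of generality, assume there is a point of $\G_1$ below $\G_2$ (or vice versa); let $\Omega^*$ denote a bounded component of $D_+^{(1)}\setminus\ov{D_+^{(2)}}$ (a region above $\G_1$ but below $\G_2$). On the part of $\pa\Omega^*$ lying on $\G_1$, the total field $u_1(\cdot,d_n)$ vanishes by the Dirichlet boundary condition; on the remaining part of $\pa\Omega^*$, which lies on $\G_2$, the function $u_1(\cdot,d_n)$ equals $u_2(\cdot,d_n)$ (by the coincidence above) which vanishes there by the Dirichlet condition for $\G_2$. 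Hence each $u_1(\cdot,d_n)$ is a Dirichlet eigenfunction of $-\Delta$ on $\Omega^*$ with eigenvalue $k^2$. Since the incident directions $d_n$ are distinct, one shows that the corresponding total fields $u_1(\cdot,d_n)$ are linearly independent (this uses the distinct asymptotic/phase behavior of the plane waves $e^{ikd_n\cdot x}$ together with the fact that the scattered parts are radiating, a standard argument). Thus we produce infinitely many linearly independent eigenfunctions for a fixed eigenvalue $k^2$ on the bounded domain $\Omega^*$, contradicting the finite multiplicity of Dirichlet eigenvalues.

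I would organize the proof as follows: (i) invoke Rellich's lemma plus unique continuation to upgrade the far-field coincidence to coincidence of the scattered (hence total) fields in the common unbounded region above both surfaces; (ii) assuming $\G_1\neq\G_2$, identify a nonempty bounded Lipschitz subdomain $\Omega^*$ of, say, $D_+^{(1)}$ bounded by pieces of $\G_1$ and $\G_2$, and verify that $u_1(\cdot,d_n)|_{\pa\Omega^*}=0$ so each is a Dirichlet eigenfunction with eigenvalue $k^2$; (iii) prove linear independence of $\{u_1(\cdot,d_n)\}_{n\in\N}$ in $L^2(\Omega^*)$; (iv) conclude by contradiction with the finite-dimensionality of the eigenspace.

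The main obstacle is step (iii): carefully establishing that the total fields associated with a countably infinite family of distinct incident plane wave directions are linearly independent on the bounded set $\Omega^*$. The cleanest route is to suppose a finite linear combination $\sum_{j=1}^{m} c_j u_1(\cdot,d_{n_j})$ vanishes on $\Omega^*$, extend this relation by unique continuation to all of $D_+^{(1)}$ (it is a solution of the Helmholtz equation vanishing on an open set), examine the far field pattern of the combined scattered field, and — since $u^\infty(\hat x,d)$ inherits a distinguishing dependence on $d$ through the reflected wave $u^r$ and the incident term — deduce $c_j=0$ for all $j$; alternatively one can argue directly from the distinct behavior of $e^{ikd_{n_j}\cdot x}$ as $|x|\to\infty$ along suitable directions, as is done in \cite{Ma05}. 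A secondary technical point is ensuring $\Omega^*$ is regular enough (Lipschitz) for the spectral theory of the Dirichlet Laplacian to apply with finite eigenvalue multiplicities; since $\G_1,\G_2\in C^2(\R)$ this is not a serious difficulty, though the geometry of the component must be handled with a little care when the two surfaces are tangent.
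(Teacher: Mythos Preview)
Your proposal is correct and follows essentially the same route as the paper: the paper does not spell out a proof but simply refers to the Schiffer-type argument of Theorem~3.1 in \cite{Ma05}, which is precisely the strategy you outline (Rellich's lemma plus unique continuation to identify the fields above both surfaces, a bounded component $\Omega^*$ trapped between $\G_1$ and $\G_2$ on which each total field is a Dirichlet eigenfunction for $k^2$, and linear independence of the total fields for distinct incident directions to contradict finite multiplicity). Your sketch is in fact more detailed than what the paper provides; the only point to tighten is the linear-independence step~(iii), for which the cleanest argument here is the one you mention second: extend a vanishing finite combination to all of $D_+^{(1)}$ by unique continuation and then use the distinct asymptotics of the plane waves $e^{ikd_{n_j}\cdot x}$ (the scattered parts are $O(|x|^{-1/2})$ while the incident and reflected plane waves are not), exactly as in \cite{Ma05}.
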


Given the incident plane wave $u^i(x)=\exp(ikd\cdot x)$ with the incident direction
$d\in S_-,$ we define the far field operator $F_d$ mapping the function $h_\G$ which
describes the locally rough surface $\G$ to the corresponding far field pattern
$u^\infty_k(\hat{x},d)$ in $L^2(S_+)$ of the scattered wave $u^s$ of the scattering
problem (\ref{eq1})-(\ref{rc}):
\be\label{FFO}
F_d(h_\G)=u^\infty_k(\cdot,d).
\en
Here, we use the subscript $k$ to indicate the dependence on the wave number $k$.
In terms of this far field operator, given the far-field pattern $u^\infty_k(\hat{x},d)$,
our inverse problem consists in solving the equation (\ref{FFO}) for the unknown
function $h_\G.$ This is a nonlinear and very ill-posed operator equation.
To solving this equation by the Newton method,
we need the Frechet differentiability at $h_\G$.
To this end, let $\triangle h\in C^2_{0,R}(\R):=\{h\in C^2(\R)\;|\;\textrm{supp}(h)
\subset (-R,R)\}$ be a small perturbation of the function $h_G\in C^2(\R)$
and let $\G_{\triangle h}:=\{(x_1,h_\G(x_1)+\triangle h(x_1))\;|\;x_1\in\R\}$
denote the corresponding boundary defined by $h_\G(x_1)+\triangle h(x_1)$.
Then $F_d$ is called Frechet differentiable at $h_\G$ if there exists a linear bounded
operator $F'_d(h_\G,\cdot):C^2_{0,R}(\R)\rightarrow L^2(S_+)$ such that
\ben
||F_d(h_\Gamma+\triangle h)-F_d(h_\Gamma)-F'_d(h_\G,\triangle h)||_{L^2(S_+)}
=o(||\triangle h||_{C^2(\R)})
\enn
as $||\triangle h||_{C^2(\R)}\to 0.$

\begin{theorem}\label{thm-FD}
Let $u(x,d)=u^i(x,d)+u^r(x,d)+u^s(x,d)$, where $u^s$ solves the problem (DP)
with the boundary data $f=-(u^i+u^r)$. If $h_{\G}\in C^2$, then $F_d$ is Frechet
differentiable at $h_\G$ and the derivative $F'_d(h_\G,\triangle h)=u'_\infty$
for $\triangle h\in C^2_{0,R}(\R)$. Here, $u'_\infty$ is the far field pattern of
$u'$ which solves the problem (DP) with the boundary data
$f=-(\nu_2\triangle h){\pa u}/{\pa\nu}$, where $\nu_2$ is the second component of
the unit normal $\nu$ on $\G$ directed into the infinite domain $D_+$.
\end{theorem}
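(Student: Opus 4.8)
The plan is to establish Fr\'echet differentiability of the far-field operator $F_d$ by the now-standard technique of differentiating the boundary integral equation $P\varphi = g$ with respect to the surface perturbation, since $F_d$ factors through $\varphi$ via the far-field representation (\ref{eq14_nr}). First I would fix $R$ large enough that $\textrm{supp}(h_\G)$, $\textrm{supp}(\triangle h)$ and $\textrm{supp}(h_\G + \triangle h)$ all lie in $(-R,R)$, so that the perturbed problem is posed on the perturbed curve $\pa D_R^-[h_\G + \triangle h]$ which agrees with $\pa D_R^-[h_\G]$ outside a compact subset of $\G_R$; in particular both curves share the same corners $x_A,x_B$ with interior angle $\pi/2$ and the same arc $\pa B_R^-$. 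The map $\triangle h \mapsto \pa D_R^-[h_\G+\triangle h]$ is affine, so I would transport everything back to the fixed reference curve $\pa D_R^-[h_\G]$ by the obvious parametrization (the change of variables only acts on the $\G_R$-portion), turning the perturbed operator $P[h_\G + \triangle h]$ into a family of operators $P(\triangle h)$ on the fixed space $C(\pa D_R^-)$ whose kernels (built from $\Phi_k$, $\pa\Phi_k/\pa\nu$, the Jacobian $|x'(s)|$, and the reflected analogues) depend smoothly — indeed analytically, away from the diagonal, and with controlled weak singularity on the diagonal — on $\triangle h$ and its first derivative. Likewise $g(\triangle h) = -2(u^i + u^r)\circ(\text{parametrization})$ on $\G_R$ depends smoothly on $\triangle h$.

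Next I would differentiate. From Theorem \ref{thm1-ie} we have uniform invertibility of $P(\triangle h)$ for $\|\triangle h\|_{C^2}$ small (the Neumann-series and Fredholm arguments there are stable under small perturbations of the kernel, since the key quantity $\alpha(x) \le 3\pi/4$ controlling $\|M_{0,r}\|$ depends only on the corner angle, which is unchanged), so $\varphi(\triangle h) = P(\triangle h)^{-1} g(\triangle h)$ is well-defined. Writing
\be
\varphi(\triangle h) - \varphi(0) = P(\triangle h)^{-1}\big[(g(\triangle h) - g(0)) - (P(\triangle h) - P(0))\varphi(0)\big],
\en
the differentiability of $\triangle h \mapsto \varphi(\triangle h)$ at $0$ reduces to differentiability of $\triangle h \mapsto g(\triangle h)$ and $\triangle h \mapsto P(\triangle h)\psi$ (for fixed $\psi$) as maps into $C(\pa D_R^-)$. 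Then I would identify the derivative: differentiating the identity $u^s = \mathcal{D}_k\varphi - i\eta\,\mathcal{S}_k\varphi$ and the boundary relations through the perturbation, one finds that $u' := \frac{d}{d\tau}u^s[h_\G + \tau\triangle h]\big|_{\tau=0}$ again solves the Helmholtz equation and the radiation condition, and its Dirichlet trace on $\G$ is obtained by differentiating $u^s\circ(\text{perturbed boundary}) = -(u^i + u^r)\circ(\text{perturbed boundary})$; using $\pa u/\pa\nu$ for the total field and that $u$ vanishes on $\G$ (so only the normal derivative survives), the trace becomes $-(\nu_2 \triangle h)\,\pa u/\pa\nu$, exactly as asserted. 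By (\ref{eq14_nr}) the far-field pattern depends linearly and boundedly on $\varphi$, hence $F_d'(h_\G,\triangle h)$ exists, equals the far field $u'_\infty$ of $u'$, and is a bounded linear operator $C^2_{0,R}(\R) \to L^2(S_+)$; the Landau estimate $o(\|\triangle h\|_{C^2})$ follows from the smoothness estimates on the kernels together with the uniform bound on $P(\triangle h)^{-1}$.

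The main obstacle is the corner analysis: unlike the smooth-boundary case, one must verify that the perturbation of the parametrized integral operator $P(\triangle h)$ is genuinely differentiable \emph{in the operator norm on $C(\pa D_R^-)$}, including near the corners $x_A, x_B$ where the kernels are only weakly singular and the graded-mesh/$\|\cdot\|_{\infty,0}$ machinery of Theorem \ref{thm1-ie} was needed even to get boundedness. Since $\triangle h$ and all its derivatives vanish in neighborhoods $B_\vep(x_A), B_\vep(x_B)$ of the corners (as $\textrm{supp}(\triangle h)$ is a compact subset of the open perturbation region), the perturbation of the kernel is actually supported away from the corners and is smooth there, so the corner part of $P$ is \emph{unchanged} by the perturbation; this is the crucial simplification that makes the argument work, and I would isolate it early using the cut-off functions $\chi$, $\psi_{r,z}$ already introduced. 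The remaining work — differentiating a family of operators with smooth compactly supported kernel variation, and checking the trace computation that produces $f = -(\nu_2\triangle h)\pa u/\pa\nu$ — is then routine, following the pattern of the Fr\'echet-differentiability proofs for obstacle scattering (e.g. Kirsch, or \cite{KressTran2000} for the rough-surface case), and I would cite those for the technical estimates rather than reproducing them.
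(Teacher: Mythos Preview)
Your proposal is correct, and in fact it is more self-contained than what the paper does: the paper's own proof consists entirely of the sentence ``The proof is similar to that of Theorem 4.1 in \cite{BaoLin2011} with appropriate modifications,'' deferring the entire argument to a reference. Since \cite{BaoLin2011} formulates the direct problem via a variational method with a DtN map on a truncating semicircle, the argument alluded to there presumably proceeds by transporting the variational formulation to a fixed reference domain and differentiating the resulting bilinear form (the material-derivative approach), rather than by differentiating an integral equation.

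Your route differs in that it stays within the integral-equation framework developed in Section~\ref{sec2}: you differentiate $P(\triangle h)\varphi=g(\triangle h)$ on the fixed curve $\pa D_R^-$, invoke Theorem~\ref{thm1-ie} for the uniform invertibility of $P(\triangle h)$, and then push the derivative through the far-field representation~(\ref{eq14_nr}). The key observation you isolate---that $\mathrm{supp}(\triangle h)\subset(-R,R)$ forces the perturbation to vanish near the corners $x_A,x_B$, so the delicate corner analysis (the operators $M_{0,r}$, the norm $\|\cdot\|_{\infty,0}$, the angle bound $\alpha(x)\le 3\pi/4$) is simply \emph{unchanged} under the perturbation---is exactly what makes this route go through without revisiting the corner machinery. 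This is the natural approach given the tools the paper has built, and it has the advantage of yielding the derivative directly in the form needed for the Newton step in Section~\ref{sec4} (cf.\ Remark~\ref{re5}). The variational route of \cite{BaoLin2011}, by contrast, avoids the layer-potential calculus entirely but requires importing the DtN framework that the present paper was specifically designed to bypass. Your identification of the boundary trace $u'|_\G=-(\nu_2\triangle h)\,\pa u/\pa\nu$ via the chain rule and the fact that $\nabla u=(\pa u/\pa\nu)\nu$ on $\G$ (since $u$ vanishes there) is the same in either approach and is correct as stated.
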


\begin{proof}
The proof is similar to that of Theorem 4.1 in \cite{BaoLin2011} with appropriate
modifications.
\end{proof}

\section{The Newton method with multi-frequency data}\label{sec4}
\setcounter{equation}{0}

We now describe the Newton iteration method for solving our inverse problem of
reconstructing the function $h_\G$ from the far field data, that is, for solving
the equation (\ref{FFO}). Motivated by \cite{BaoLin2011}, we use multi-frequency
far field data in order to get an accurate reconstruction of the function $h_\G$.

For each single frequency data with wave number $k>0$, we replace (\ref{FFO}) by
the linearized equation
\be\label{LFFO}
F_d(h_\G)+F_d'(h_\G,\triangle h)=u^\infty_k(\cdot,d)
\en
which we will solve for $\triangle h$ by using the Levenberg-Marquardt algorithm
(see, e.g. \cite{Hohage1999}) in order to improve an approximation to the function $h_\G$.
The Newton method consists in iterating this procedure.

In the numerical examples, we consider the noisy measurement data
$u^\infty_{\delta,k}(\hat{x},d_l)$, $\hat{x}\in S_+,l=1,\ldots,n_d,$
which satisfies
\ben
||u^\infty_{\delta,k}(\cdot,d_l)-u^\infty_k(\cdot,d_l)||_{L^2(S_+)}
\leq\delta||u^\infty_k(\cdot,d_l)||_{L^2(S_+)}.
\enn
Here, $\delta$ is called a noisy ratio.
In practical computations $h_\G$ has to be taken from a finite-dimensional subspace
$R_M\subset C^2_{0,R}(\R)$ and the equation (\ref{LFFO}) has to be approximately solved
by projecting it on a finite-dimensional subspace of $L^2(S_+)$ by collocation
at a finite number $n_f$ of equidistant points $\hat{x}_j\in S_+$, $j=1,\ldots,n_f.$
Let $R_M=\textrm{span}\{\phi_1,\phi_2,\cdots,\phi_M\}$, where $\phi_i,j=1,2,\ldots,M,$
are spline functions with support in $(-R,R)$ (see Remark \ref{re4} below).
Assume that $h^{app}\in R_M$ is an approximation to $h_\G$ with $\G^{app}$ being the
corresponding boundary. Then, by the strategy in \cite{Hohage1999}, we seek an updated
function $\triangle h=\sum^M_{i=1}\triangle a_i\phi_i$ in $R_M$ of $h^{app}$
such that $\triangle a_i$, $i=1,\ldots,M,$ solve the minimization problem:
\be\label{eq15_nr}
\min_{\triangle a_i}\left\{\sum^{n_d}_{l=1}\sum_j^{n_f}\big|F_{d_l}(h^{app})(\hat{x}_j)
+F'_{d_l}(h^{app},\triangle h)(\hat{x}_j)
-u^\infty_{\delta,k}(\hat{x}_j,d_l)\big|^2+\beta\sum_{i=1}^M|\triangle a_i|^2\right\}
\en
where the regularization parameter $\beta>0$ is chosen such that
\be\no
&&\left(\sum^{n_d}_{l=1}\sum_j^{n_f}\big|F_{d_l}(h^{app})(\hat{x}_j)
+F'_{d_l}(h^{app},\triangle h)(\hat{x}_j)
-u^\infty_{\delta,k}(\hat{x}_j,d_l)\big|^2\right)^{1/2}\\ \label{eq16_nr}
&&\qquad\qquad=\rho\left(\sum^{n_d}_{l=1}\sum_j^{n_f}\big|F_{d_l}(h^{app})(\hat{x}_j)
-u^\infty_{\delta,k}(\hat{x}_j,d_l)\big|^2\right)^{1/2}
\en
for a given constant $\rho\in(0,1)$. Then a new approximation to $h_\G$ is given
as $h^{app}+\triangle h$. Define the error function
\ben
Err_k=\frac{1}{n_d}\sum^{n_d}_{l=1}
\left[\sum_j^{n_f}\big|F_{d_l}(h^{app})(\hat{x}_j)
     -u^\infty_{\delta,k}(\hat{x}_j,d_l)\big|^2\Big/
\sum_j^{n_f}\big|u^\infty_{\delta,k}(\hat{x}_j,d_l)\big|^2\right]^{1/2}
\enn
Then the iteration is stopped if $Err_k\leq\tau\delta$,
where $\tau>1$ is a fixed constant. See \cite{Hohage1999} for details.

\begin{remark}\label{re4} {\rm
For a positive integer $M\in\mathds{N}^+$ let $h=2R/(M+5)$ and $t_i=(i+2)h-R$.
Then the spline basis functions of $R_M$ are defined by
$\phi_i(t)=\phi((t-t_i)/h),i=1,2,\ldots,M,$ where
\ben
\phi(t):=\sum^{k+1}_{j=0}\frac{(-1)^j}{k!}\left(\begin{array}{c}k+1\\
                                      j\end{array}\right)
       \left(t+\frac{k+1}{2}-j\right)^k_+
\enn
with $z^k_+=z^k$ for $z\geq0$ and $=0$ for $z<0$.
In this paper, we choose $k=4$, that is, $\phi$ is the cubic spline function.
Note that $\phi_i\in C^3(\R)$ with support in $(-R,R)$.
See \cite{DeBoor} for details.
}
\end{remark}

\begin{remark}\label{re4a} {\rm
Our inversion Algorithm \ref{alg} below does not require the locally rough surface $\G$
to be parameterized by a function $h_\G$ since, in practical computations, $h_\G$ is taken
from the finite-dimensional subspace $R_M$ spanned by spline functions $\phi_i,j=1,2,\ldots,M$
with support in $(-R,R)$ (see discussions before Remark \ref{re4}).
Therefore, Algorithm \ref{alg} can deal with more general $C^2-$smooth $\G$.
}
\end{remark}

\begin{remark}\label{re5} {\rm
For the synthetic far-field data of the scattering problem, we choose the coupling parameter
$\eta=k$ and get a finite number of measurements $u^\infty_{\delta,k}(\hat{x}_j,d),$
$j=0,1,\ldots,n_f,$ with equidistant points $\hat{x}_j=j\pi/n_f$ for a positive integer
$n_f\in\N^+$. For the numerical solution of the scattering problem in each iteration,
we choose $\eta=0$ both to avoid the inverse crime and to reduce the complexity of the
computation. Here, we need to assume that $k$ is not a Dirichlet eigenvalue of the region
bounded by the curves $\{(x_1,h^{app}(x_1))\;|\;x_1\in[-R,R]\}$ and $\pa B_R^-$.
Further, it is seen from Theorem \ref{thm-FD} that, in order to compute the
Frechet derivative in each iteration we need to compute the normal derivative
$\pa u^s/\pa\nu$ of the scattered wave $u^s$ on a subset of
$\{x(t_0),x(t_1),\ldots,x(t_{2n-1})\}$ which is contained in
$\{(x_1,h^{app}(x_1))\;|\;x_1\in (-R,R)\}$.
Since $h^{app}\in C^3(\R)$ and the two corners $x_A$ and $x_B$ are not included in the
subset, we just use the quadrature rules in \cite{Kress1995} in the form (\ref{eq10_nr})
of the scattered field $u^s$ and the graded mesh with the discrete values
$\varphi^{(n)}_j,j=0,1,\ldots,2n-1,$ of $\varphi$ which are obtained from
the scattering problem in each iteration.
}
\end{remark}

The Newton iteration algorithm with multi-frequency far-field data can be given
in the following Algorithm \ref{alg}.

\begin{algorithm}\label{alg}
Given the far field patterns $u^\infty_{\delta,k_i}(\hat{x}_j,d_l),i=1,2,\ldots
N,j=0,1,\ldots,n_f,l=1,\cdots,n_d$, where $k_1<k_2<\cdots<k_N$.
\begin{description}
\item 1) Let $h^{app}=0$ be the initial guess of $h_\G$ and set $i=0$.
\item 2) Set $i=i+1$. If $i>N$, then stop the iteration; otherwise, set $k=k_i$
and go to Step 3).
\item 3) If $Err_k<\tau\delta$, return to Step 2); otherwise, go to Step 4).
\item 4) Solve (\ref{eq15_nr}) with the strategy (\ref{eq16_nr}) to get
an updated function $\triangle h$. Let $h^{app}$ be updated by $h^{app}+\triangle h$
and go to Step 3).
\end{description}
\end{algorithm}

\begin{remark}\label{re6} {\rm
Our novel integral equation formulation proposed in Section \ref{sec2} can also be applied
to develop a similar Newton inversion algorithm with multiple frequency near-field data.
}
\end{remark}

\section{Numerical examples}\label{sec5}
\setcounter{equation}{0}

In this section, several numerical experiments are presented to demonstrate the
effectiveness of our algorithm. The following assumptions are made in all numerical
experiments.
\begin{description}
\item 1) For each example we use multi-frequency data with the wave
numbers $k=1,3,\ldots,2N-1,$ where $N$ is the total number of frequencies.
\item 2) To generate the synthetic data and to compute the Frechet derivative
in each iteration, we solve the novel integral equation by choosing $n=128$ for the wave
number $k<13$ and $n=256$ for the wave number $k\geq13$.
\item 3) We measure the half-aperture (the measurement angle is between $0$ and $\pi$)
far-field pattern with $65$ measurement points, that is,
$n_f=64$. The noisy data $u^\infty_{\delta,k}$ are obtained as
$u^\infty_{\delta,k}=u^\infty_k+\delta\zeta||u^\infty_k||_{L^2(S_+)}/||\zeta||_{L^2(S_+)}$,
where $\zeta$ is a random number with $\Rt(\zeta),\I(\zeta)\in N(0,1)$.
\item 4) We set the parameters $\rho=0.8$ and $\tau=1.5$.
\item 5) In each figure, we use solid line '-' and dashed line '- -'
to represent the actual curve and the reconstructed curve,
respectively.
\item 6) For the shape of the local perturbation of the infinite plane in all examples,
we assume that $\textrm{supp}(h_\G)\in(-1,1)$; we further choose $R=1$ and use the
smooth curves which are not in $R_M$.
\end{description}

\textbf{Example 1.} In this example, we consider the case when the local
perturbation of the infinite plane is over the $x_1$-axis with
\ben
h_\G(x_1)=\phi(({x_1+0.2})/{0.3}),
\enn
where $\phi$ is defined in Remark \ref{re4}. Here, we consider noisy data with $3\%$
noise and use one incident direction $d=(\sin(\pi/3),-\cos(\pi/3))$.
For the inverse problem, we choose the number
of the spline basis functions to be $M=10$and the total number of frequencies to be
$N=6$. Figure \ref{fig1} shows the reconstructed curves at $k=1,5,7,11,$
respectively.
\begin{figure}[htbp]
  \centering
  \subfigure{
    \includegraphics[width=3in]{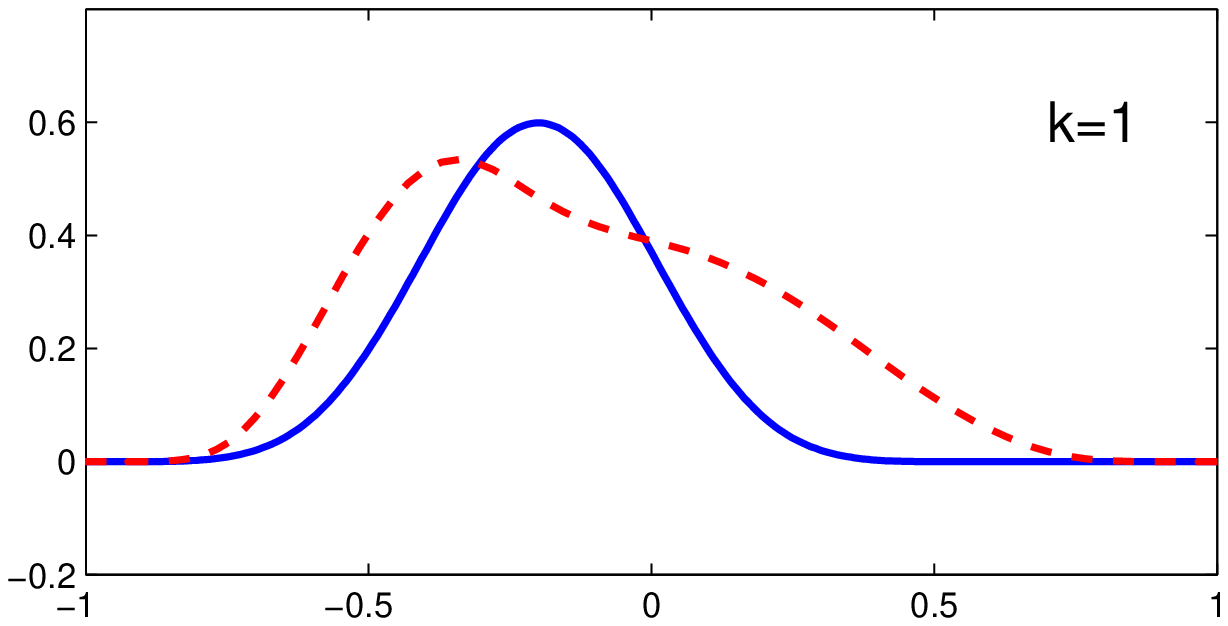}}
  \hspace{-0.35in}
  \vspace{-0.5in}
  \subfigure{
    \includegraphics[width=3in]{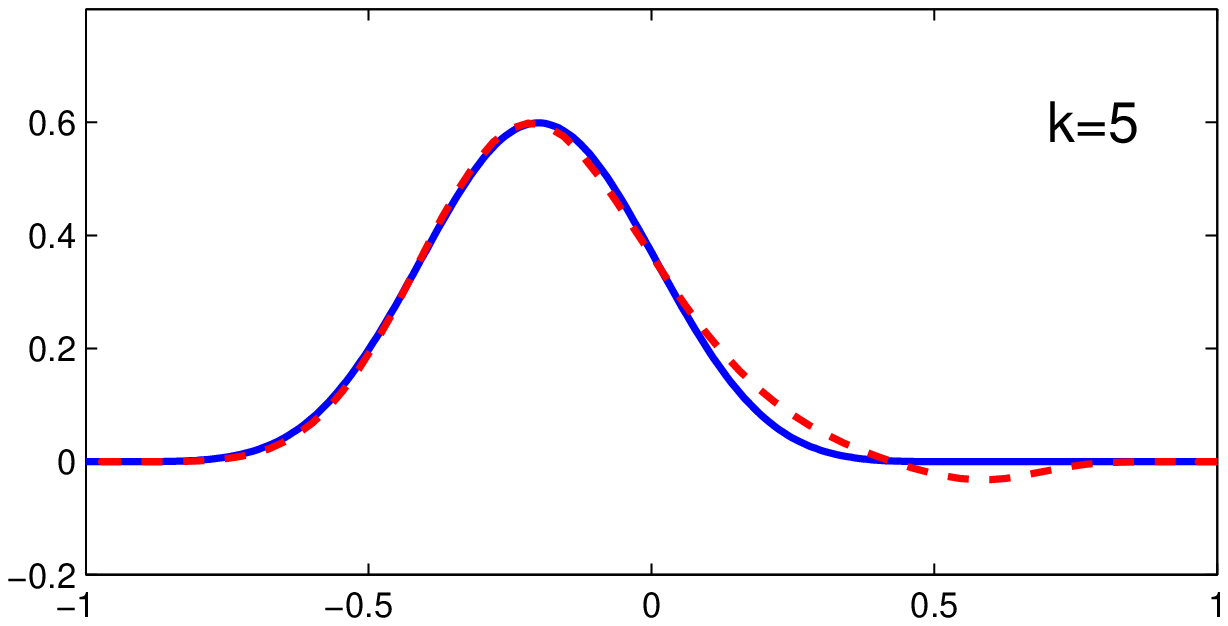}}
  \vspace{0in}
  \hspace{-0.35in}
  \subfigure{
    \includegraphics[width=3in]{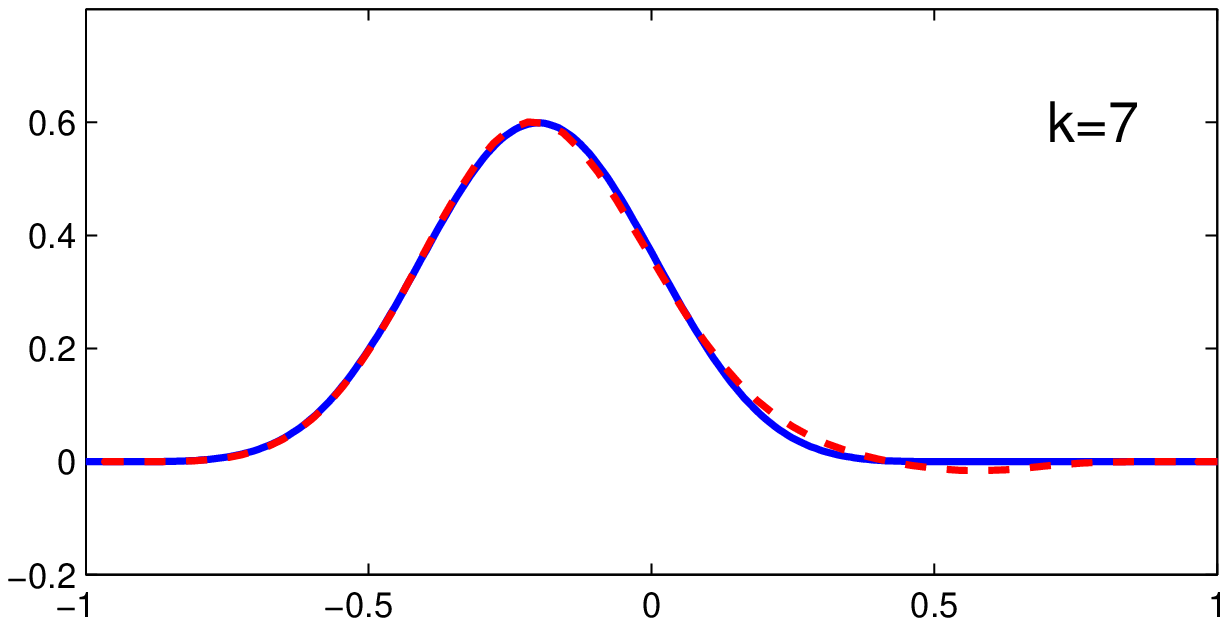}}
  \vspace{0in}
  \hspace{-0.35in}
  \subfigure{
    \includegraphics[width=3in]{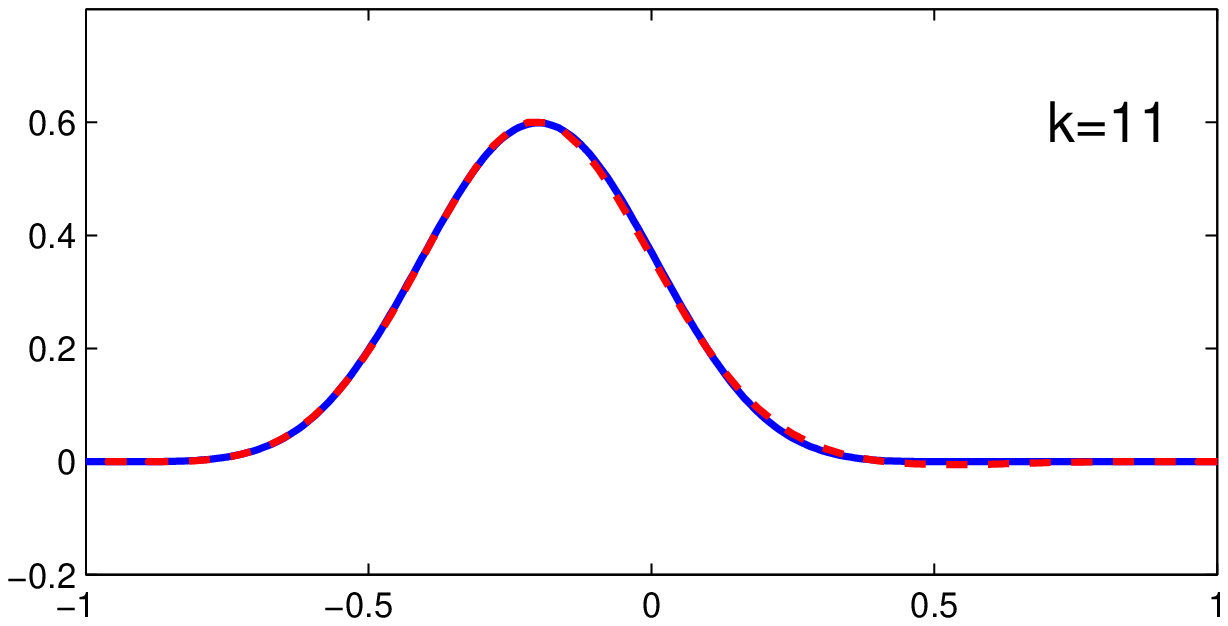}}
      \vspace{-0.5in}
\caption{The reconstructed curve (dashed line) at $k=1,5,7,11,$ respectively, from $3\%$ noisy data
with one incident direction $d=(\sin(\pi/3),-\cos(\pi/3))$, where the real curve is denoted by
the solid line.
}\label{fig1}
\end{figure}

\textbf{Example 2.}  In this example, we consider the case when the local
perturbation of the infinite plane is under the $x_1$-axis with
\ben
h_\G(x_1)=-0.8\phi(({x_1-0.3})/{0.2}),
\enn
where $\phi$ is also given in Remark \ref{re4}.
In the inverse problem, the number of the spline
basis functions is chosen to be $M=10$ and the total number
of frequencies is chosen to be $N=9$. Figure \ref{fig2} presents the
reconstructed curves at $k=1,5,11,17,$ respectively, from $3\%$ noisy data
with one incident direction $d=(\sin(\pi/3),-\cos(\pi/3))$.
\begin{figure}[htbp]
  \centering
  \subfigure{
    \includegraphics[width=3in]{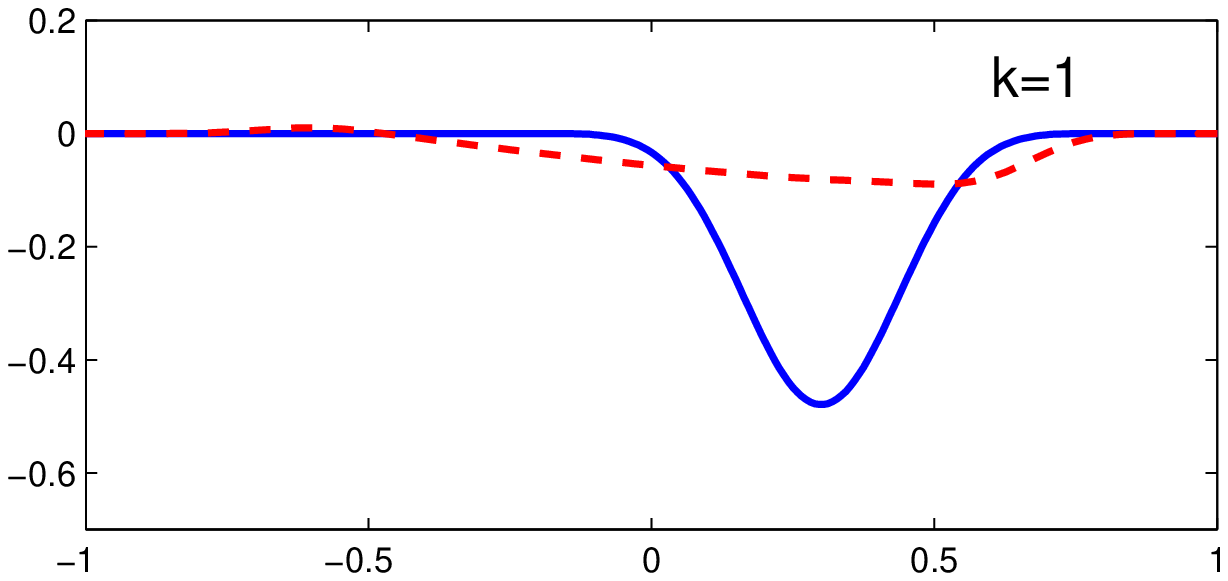}}
  \hspace{-0.35in}
  \vspace{-0.6in}
  \subfigure{
    \includegraphics[width=3in]{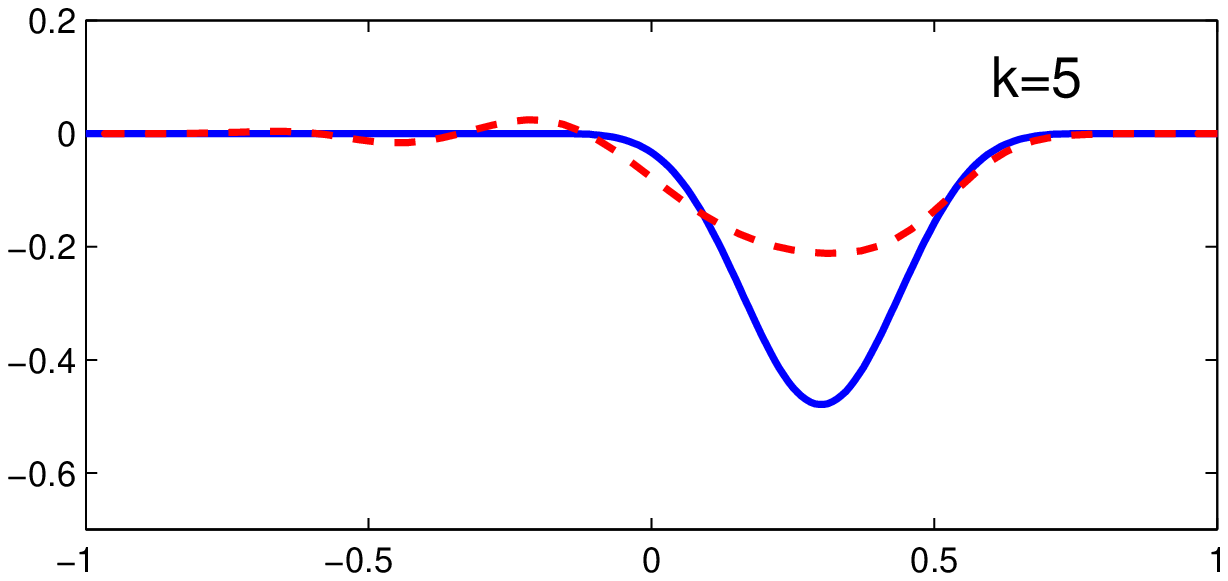}}
  \vspace{0in}
  \hspace{-0.35in}
  \subfigure{
    \includegraphics[width=3in]{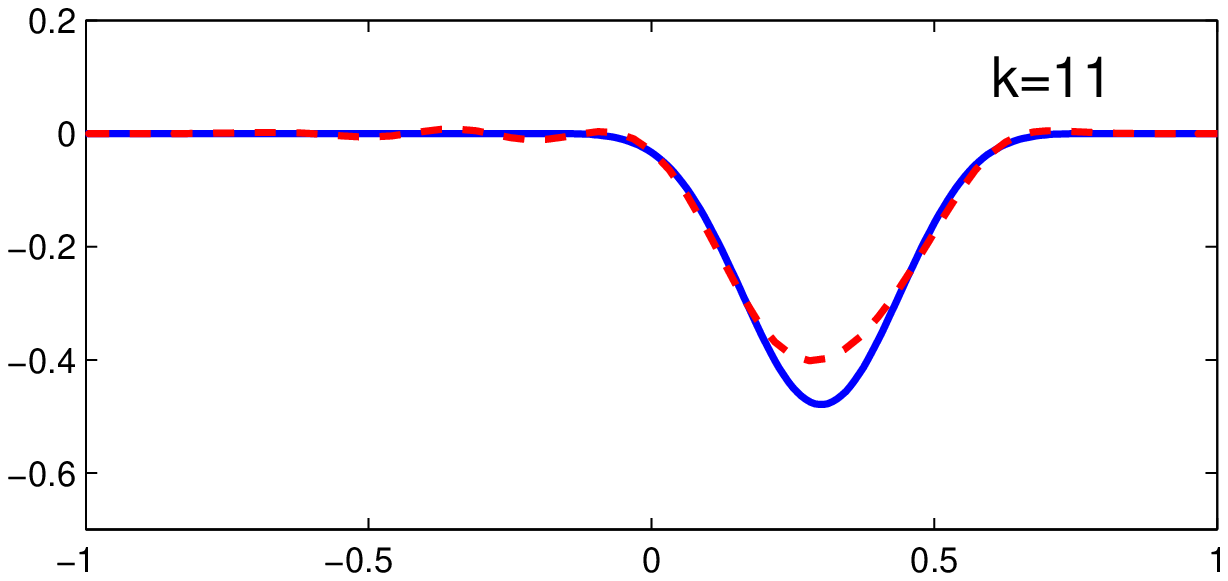}}
  \vspace{0in}
  \hspace{-0.35in}
  \subfigure{
    \includegraphics[width=3in]{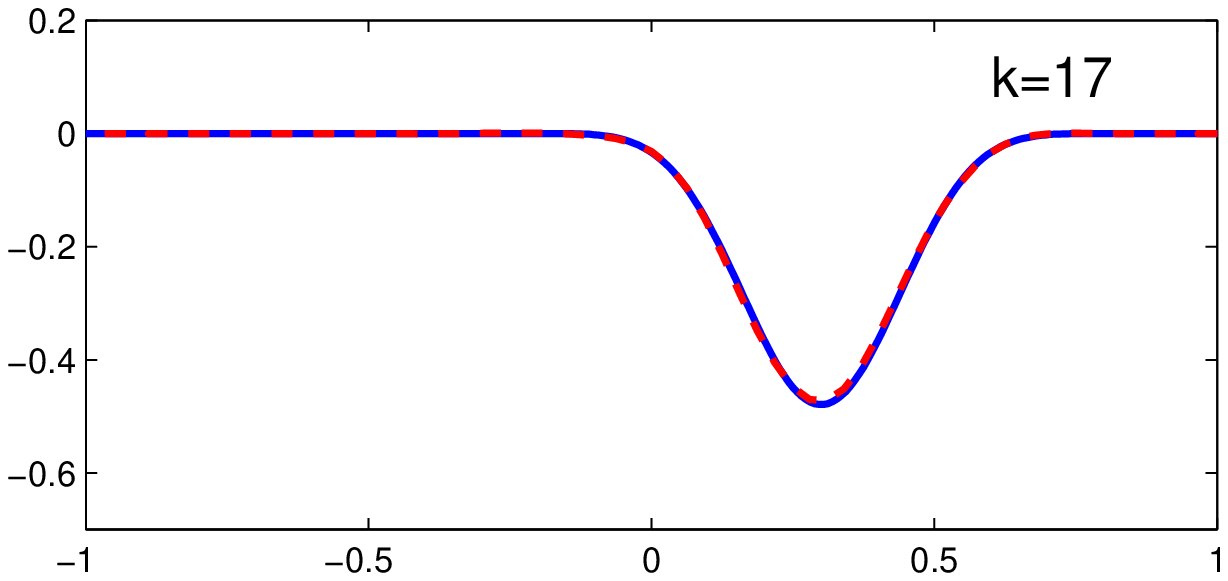}}
      \vspace{-0.5in}
\caption{The reconstructed curve (dashed line) at $k=1,5,11,17,$ respectively, from $3\%$ noisy data
with one incident direction $d=(\sin(\pi/3),-\cos(\pi/3))$, where the real curve is denoted by
the solid line.
}\label{fig2}
\end{figure}

\textbf{Example 3.} The reconstruction considered in this example is a more challenging
one with
\ben
h_\G(x_1)=\left\{\begin{array}{ll}
\ds \exp\left[16/(25x_1^2-16)\right]\sin(4\pi x_1), &|x_1|<4/5\\
\ds 0, &|x_1|\geq0
\end{array}\right.
\enn
Here, we consider $10\%$ noisy data. In order to get a good reconstruction,
the number of the spline basis functions is taken to be $M=20$ and the
total number of frequencies is taken to be $N=15$.
Figure \ref{fig3} gives the reconstruction at $k=1,9,19,29$, respectively,
with one incident direction $d=(0,-1)$ (normal incidence from the top).
\begin{figure}[htbp]
  \centering
  \subfigure{
    \includegraphics[width=3in]{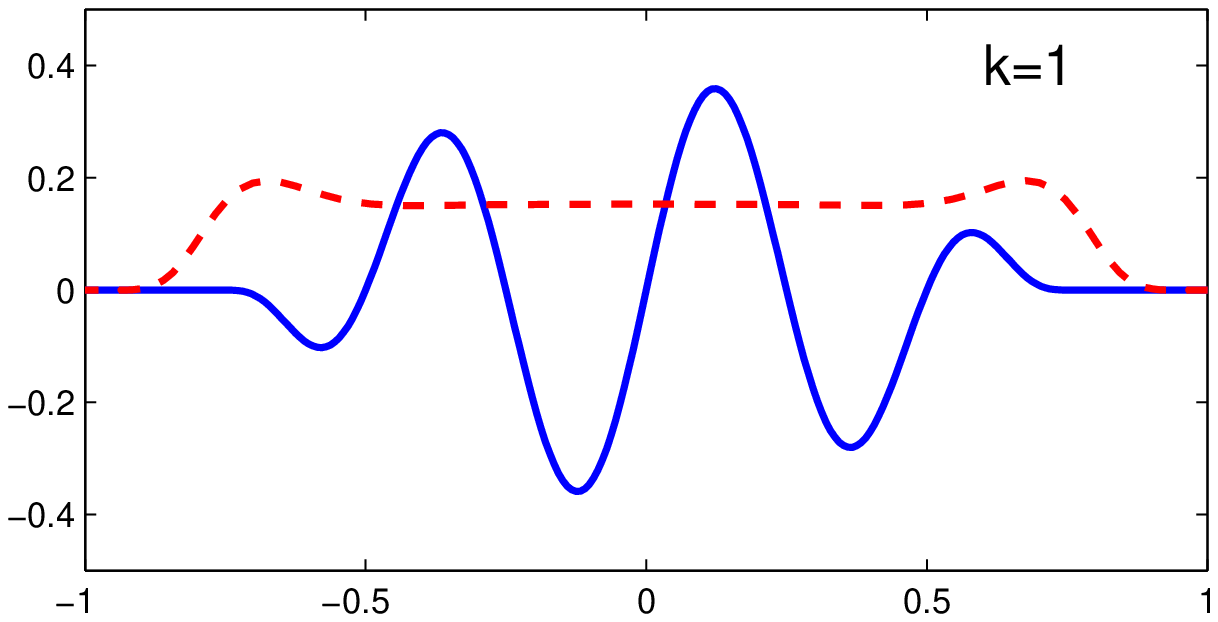}}
  \hspace{-0.35in}
  \vspace{-0.5in}
  \subfigure{
    \includegraphics[width=3in]{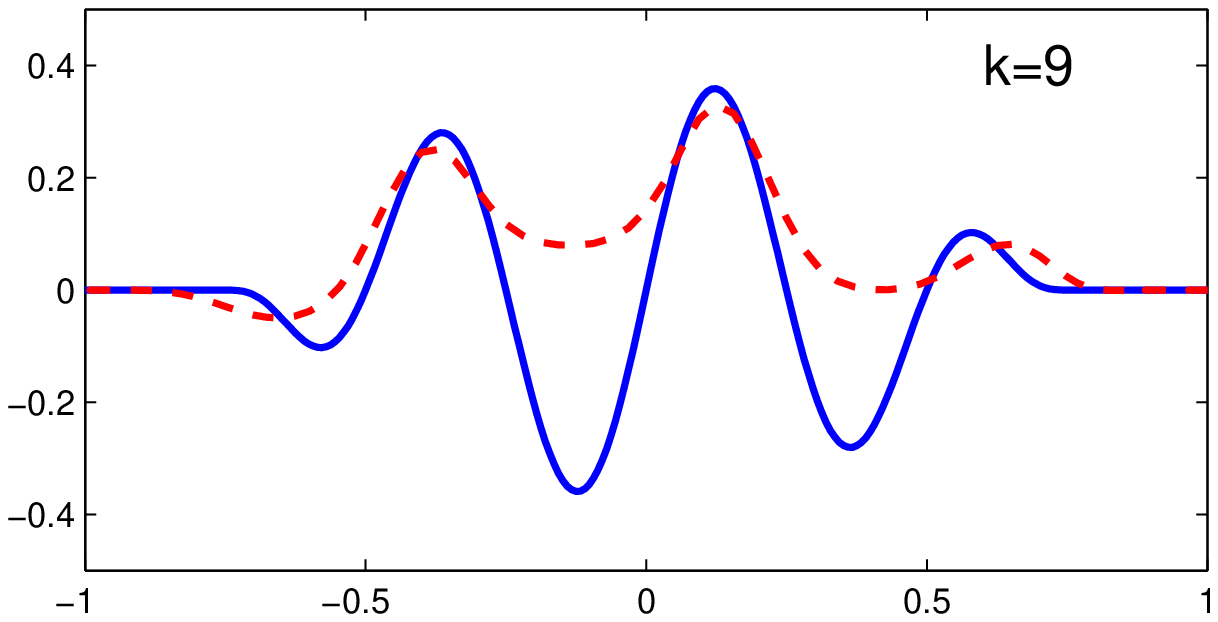}}
  \vspace{0in}
  \hspace{-0.35in}
  \subfigure{
    \includegraphics[width=3in]{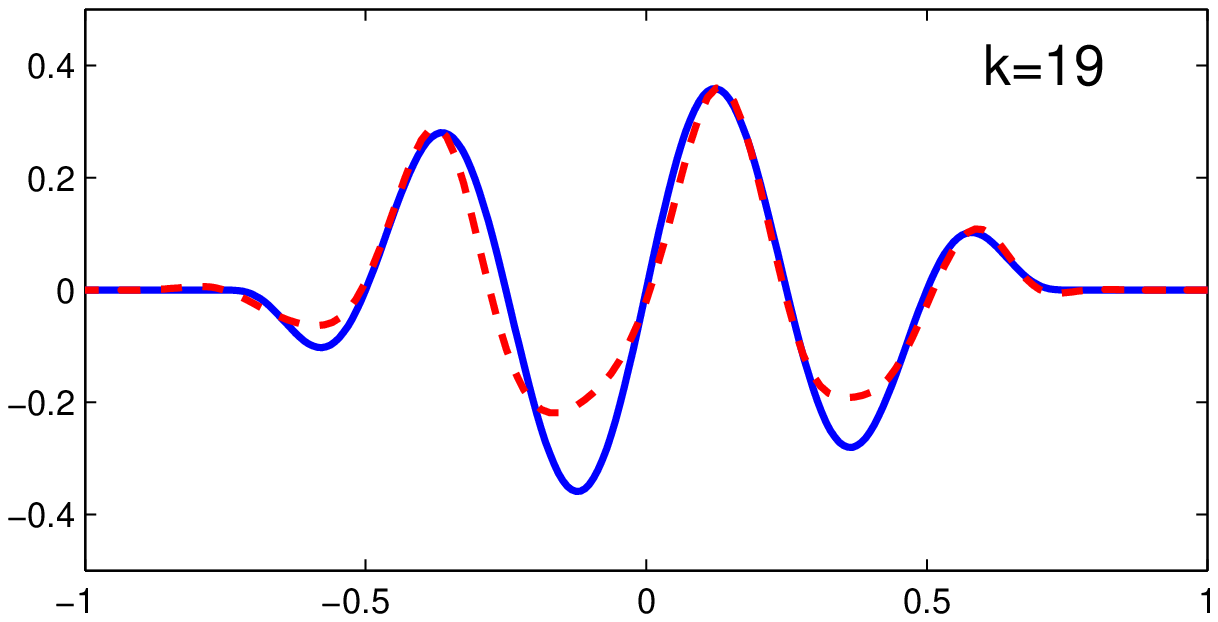}}
  \vspace{0in}
  \hspace{-0.35in}
  \subfigure{
    \includegraphics[width=3in]{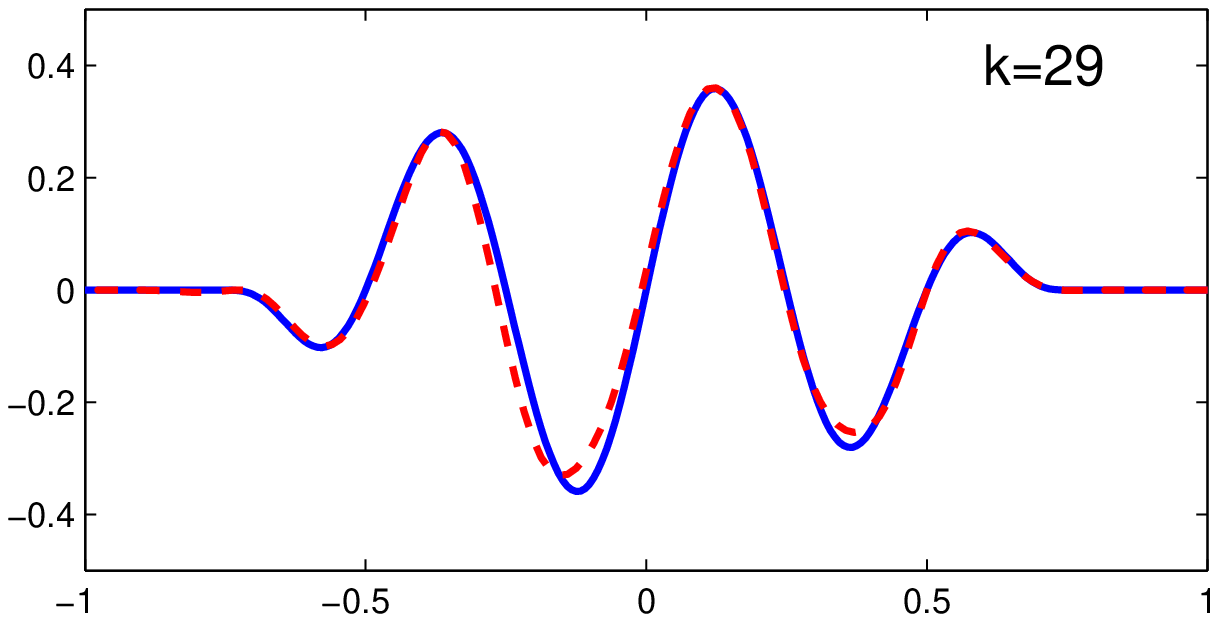}}
      \vspace{-0.5in}
\caption{The reconstructed curve (dashed line) at $k=1,9,19,29$, respectively, from $10\%$ noisy
data with normal incidence from the top, where the real curve is denoted by the solid line.
}\label{fig3}
\end{figure}


\textbf{Example 4 (multi-scale profile).}
We now consider the multi-scale case with
\ben
h_\G(x_1)=\left\{\begin{array}{ll}
\ds \exp\left[16/(25x_1^2-16)\right]\left[0.5+0.1\sin(16\pi x_1)\right], &|x_1|<4/5\\
\ds 0, &|x_1|\geq0.
\end{array}\right.
\enn
This function has two scales: the macro-scale is represented by
the function $0.5\exp[16/(25x_1^2-16)]$, and the micro-scale is represented by the
function $0.1\exp[16/(25x_1^2-16)]\sin(16\pi x_1)$.
To capture the two-scale features of the profile, the number of spline basis functions
is chosen to be $M=40$, and the total number of frequencies used is $N=30$.
The reconstruction is obtained with $10\%$ noisy data using one incident plane wave
with normal incidence from the top. Figure \ref{fig5} presents the reconstructed profiles
at $k=9,33,45,59$. From Figure \ref{fig5} it is observed that the macro-scale features are captured
when $k=9$ (Figure \ref{fig5}, top left), while the micro-scale features are captured at $k=59$
(Figure \ref{fig5}, bottom right). It is interesting to note that the resolution of the reconstruction
does not improve much for $k\in[9,33]$ and then improves from a larger $k$ (e.g., $k=45$)
until a sufficiently large $k$ (e.g., $k=59$) for which the whole local rough surface is accurately
recovered even with $10\%$ noisy data. This indicates that our Newton algorithm with multiple
frequency far-field data can give a stable and accurate reconstruction of multi-scale profiles
with noise data as long as sufficiently high frequency data are used. This is similar to
the reconstruction algorithm with multi-frequency near-field data developed in \cite{BaoLin2011}.

\begin{figure}[htbp]
  \centering
  \subfigure{
    \includegraphics[width=3in]{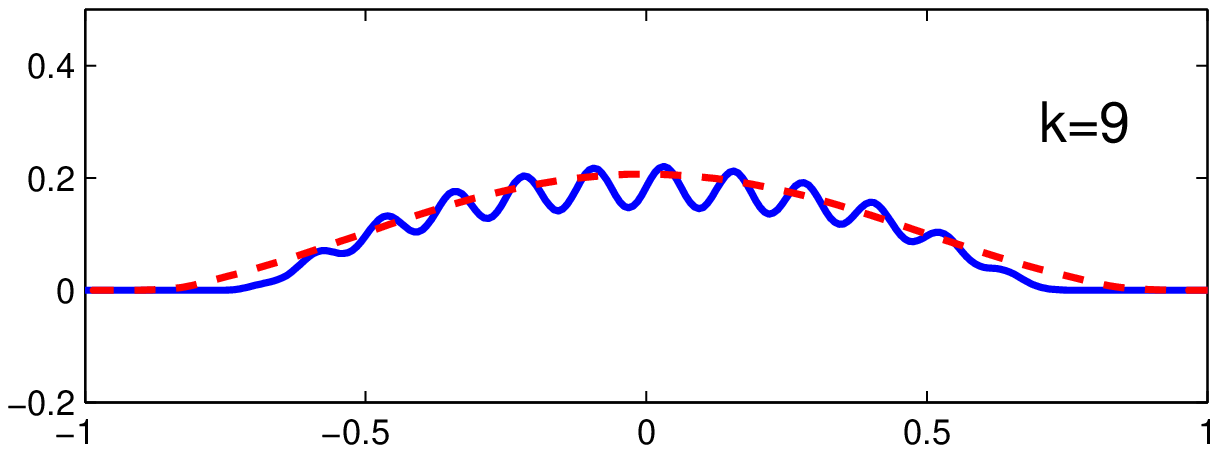}}
  \hspace{-0.35in}
  \vspace{-0.7in}
  \subfigure{
    \includegraphics[width=3in]{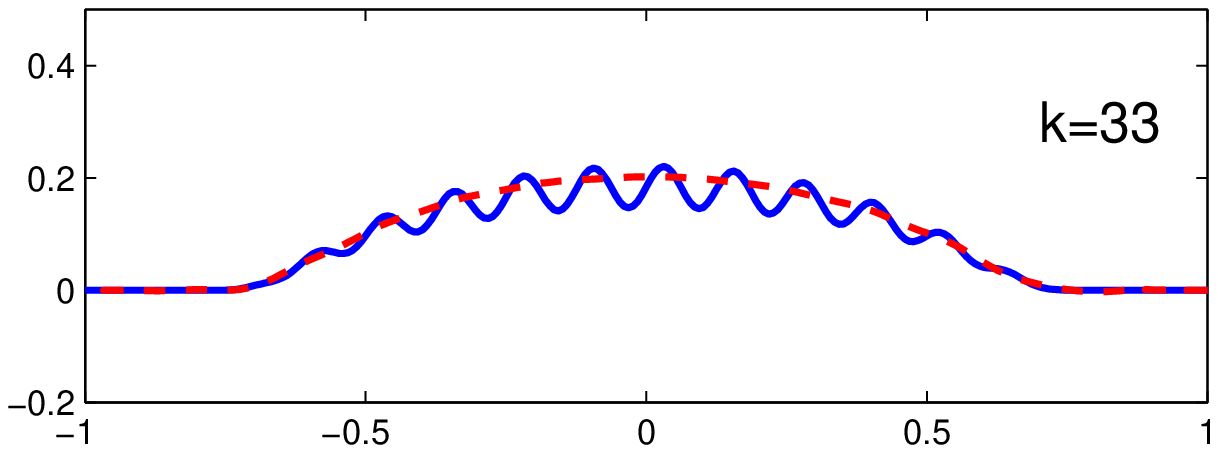}}
  \vspace{0in}
  \hspace{-0.35in}
  \subfigure{
    \includegraphics[width=3in]{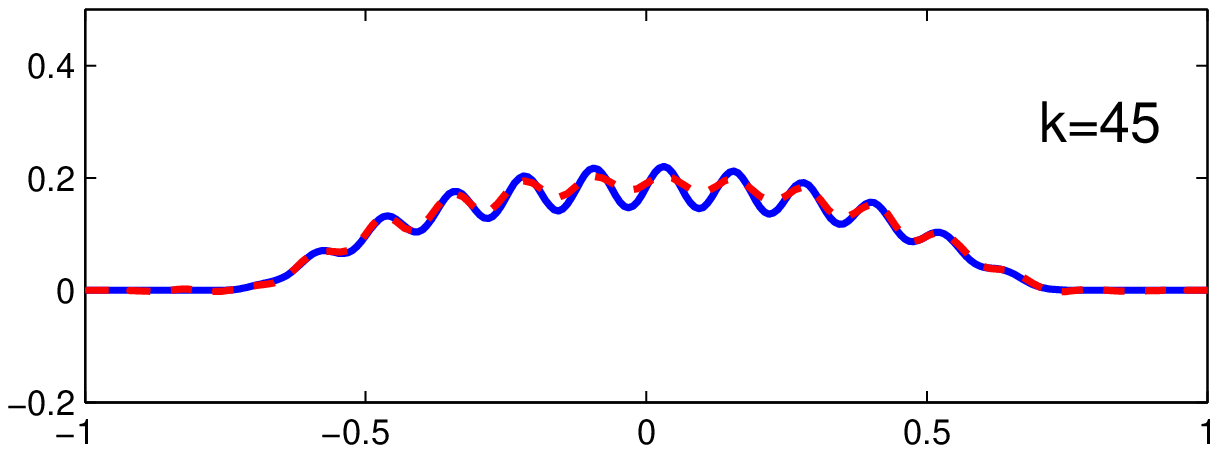}}
  \vspace{0in}
  \hspace{-0.35in}
  \subfigure{
    \includegraphics[width=3in]{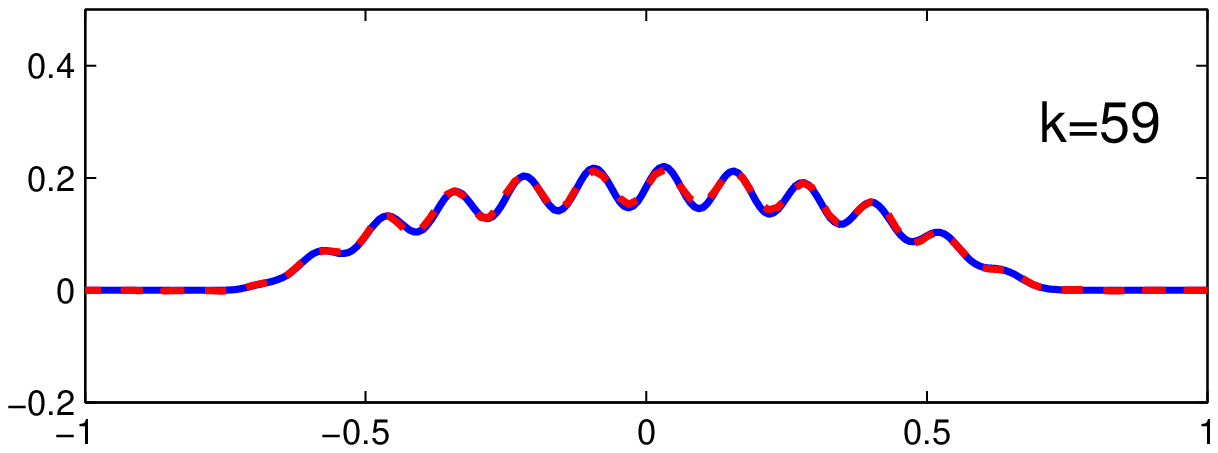}}
      \vspace{-0.5in}
  \caption{The reconstructed curve (dashed line) at $k=9,33,45,59$, respectively, from $10\%$ noisy
data with normal incidence from the top, where the real curve is denoted by the solid line.
}\label{fig5} 
\end{figure}

The above numerical results illustrate that the Newton iteration algorithm with
multiple frequency data gives a stable and accurate reconstruction of the local perturbation
of the infinite plane even in the presence of $10\%$ noise in measurements.
From Figures \ref{fig1}-\ref{fig5} it is seen that the upper part of
the locally rough surface can be recovered easily at lower frequencies; however,
much higher frequencies are needed in order to recover the deep, lower part of the
locally rough surface as well as the fine details of the micro-scale features of
multi-scale profiles.

We are currently trying to extend the technique to the TM polarization case.
Furthermore, it is anticipated that the reconstruction method can be generalized
to the three-dimensional case.

\section*{Acknowledgements}

This work was supported by the NNSF of China under grants 11071244 and 11161130002.


\begin{thebibliography}{99}

\bibitem{ABW} H. Ammari, G. Bao and A. Wood, An integral equation method for the electromagnetic
scattering from cavities, {\em Math. Methods Appl. Sci. \bf23} (2000), 1057-1072.

\bibitem{BaoGaoLi2011}G. Bao, J. Gao and P. Li, Analysis of direct and inverse cavity
scattering problems, {\em Numer. Math. Theor. Meth. Appl. \bf4} (2011), 419-442.

\bibitem{BaoLin}G. Bao and J. Lin, Near-field imaging of the surface displacement on an infinite
ground plane, to appear in {\em Inverse Problems Imaging}.

\bibitem{BaoLin2011}G. Bao and J. Lin, Imaging of local surface displacement on an infinite
ground plane: the multiple frequency case, {\em SIAM J. Appl. Math. \bf71} (2011), 1733-1752.

\bibitem{BaoLin2012}G. Bao and J. Lin, Imaging of reflective surfaces by near-field optics,
{\em Optics Letters \bf37} (2012), 5027-5029.

\bibitem{BaoSun2005}G. Bao and W. Sun, A fast algorithm for the electromagnetic scattering
from a large cavity, {\em SIAM J. Sci. Comput. \bf27} (2005), 553-574.

\bibitem{BurkardPotthast2010} C. Burkard and R. Potthast, A multi-section approach for rough
surface reconstruction via the Kirsch--Kress scheme,
{\em Inverse Problems \bf26} (2010) 045007 (23pp).


\bibitem{CZ98}S.N. Chandler-Widle and B. Zhang, A uniqueness result for scattering by infinite
rough surfaces, {\em SIAM J. Appl. Math. \bf58} (1998), 1774-1790.

\bibitem{CRZ99}S.N. Chandler-Wilde, C.R. Ross and B. Zhang, Scattering by infinite one-dimensional
rough surfaces, {\em Proc. R. Soc. London A \bf455} (1999), 3767-3787.

\bibitem{CM05} S.N. Chandler-Wilde and P. Monk, Existence, uniqueness and variational methods
for scattering by unbounded rough surfaces, {\em SIAM J. Math. Anal. \bf37} (2005), 598-618.

\bibitem{CHP06a} S.N. Chandler-Wilde, E. Heinemeyer and R. Potthast, A well-posed integral equation
formulation for three-dimensional rough surface scattering, {\em Proc. R. Soc. London A \bf462}
(2006), 3683-3705.

\bibitem{CHP06b} S.N. Chandler-Wilde, E. Heinemeyer and R. Potthast, Acoustic scattering by mildly
rough unbounded surfaces in three dimensions, {\em SIAM J. Appl. Math. \bf66} (2006), 1002-1026.

\bibitem{CE10} S.N. Chandler-Wilde and J. Elschner, Variational approach in weighted Sobolev spaces
to scattering by unbounded rough surfaces, {\em SIAM J. Math. Anal. \bf42} (2010), 2554-2580.

\bibitem{CL05} S.N. Chandler-Wilde and C. Lines, A time domain point source method for inverse
scattering by rough surfaces, {\em Computing \bf75} (2005), 157-180.

\bibitem{CGHIR}R. Coifman, M. Goldberg, T. Hrycak, M. Israeli and V. Rokhlin, An improved operator
expansion algorithm for direct and inverse scattering computations,
{\em Waves Random Media \bf9} (1999), 441-457.

\bibitem{ColtonKress1983}D. Colton and R. Kress, {\em Integral Equation Methods in Scattering
Theory}, John Wiley, New York, 1983.

\bibitem{ColtonKress1998} D. Colton and R. Kress, {\em Inverse Acoustic and Electromagnetic
Scattering Theory} (2nd edn), Springer, Berlin, 1998.

\bibitem{DeBoor} C. de Boor, {\em A Practical Guide to Splines}, Springer, New York, 2001.

\bibitem{DeSanto1}J.A. DeSanto and R.J. Wombell, Reconstruction of rough surface profiles with
the Kirchhoff approximation, {\em J. Opt. Soc. Amer. A \bf8} (1991), 1892-1897.

\bibitem{DeSanto2}J.A. DeSanto and R.J. Wombell, The reconstruction of shallow rough-surface
profiles from scattered field data, {\em Inverse Problems \bf7} (1991), L7-L12.

\bibitem{Ma05}L. Feng and F. Ma, Uniqueness and local stability for the inverse scattering problem
of determining the cavity, {\em Science China A Math. \bf48} (2005), 1113-1123.

\bibitem{Henrici1986}P. Henrici, {\em Applied and Computational Complex Analysis} Vol. 3,
John Wiley, New York, 1986.

\bibitem{Hohage1999}T. Hohage, {\em Iterative Methods in Inverse Obstacle Scattering:
Regularization Theory of Linear and Nonlinear Exponentially Ill-Posed Problems}, PhD thesis,
University of Linz, 1999.


\bibitem{Kress1990}R. Kress, A Nystr\"{o}m method for boundary integral equations
in domain with corners, {\em Numer. Math. \bf58} (1990), 145-161.

\bibitem{Kress1995}R. Kress, On the numerical solution of a hypersingular integral equation in
scattering theory, {\em J. Comp. Appl. Math. \bf61} (1995), 345-360.

\bibitem{Kress1999}R. Kress, {\em Linear Integral Equations} (2nd edn), Springer, New York, 1999.

\bibitem{KressTran2000}R. Kress and T. Tran, Inverse scattering for a locally perturbed half-plane,
{\em Inverse Problems \bf16} (2000), 1541-1559.

\bibitem{Li2012}P. Li, An inverse cavity problem for Maxwell's equations,
{\em J. Differential Equations \bf252} (2012), 3209-3225.

\bibitem{Ruland1978}C. Ruland, Ein Verfahren zur L\"{o}sung von $(\Delta +k^2)u=0$
Aussengebieten mit Ecken, {\em Applic. Anal. \bf7} (1978), 69-79.



\bibitem{Willers1987} A. Willers, The Helmholtz equation in disturbed half-spaces,
{\em Math. Methods Appl. Sci. \bf9} (1987), 312-323.

\bibitem{ZC03}B. Zhang and S.N. Chandler-Wilde, Integral equation methods for scattering by
infinite rough surfaces, {\em Math. Methods Appl. Sci. \bf26} (2003), 463-488.

\end{thebibliography}
\end{document}